\def\dvg{\rm div}
\def\IntO{\int\limits_{\Omega}}
\def\Rd{\mathbb R^d}
\newtheorem{Proposition}{Proposition}[section]
\newtheorem{Theorem}{Theorem}[section]
\newtheorem{Remark}{Remark}[section]
 \numberwithin{equation}{section}
\newcommand\be{\begin{eqnarray*}}
\newcommand\ee{\end{eqnarray*}}
\newcommand\ben{\begin{eqnarray}}
\newcommand\een{\end{eqnarray}}
 \def\Normt#1{|\!|\!| #1 |\!|\!|}   % ||...||
\def\Nor2#1{\langle\!\langle #1 \rangle\!\rangle}
\def\mean#1{\{\!\!\{\,#1\,\}\!\!\}}
\def\wh{\widehat}
\def\wt{\widetilde}
\def\cP{\mathcal P}
\def\cO{{\mathcal O}}
\def\cOD{{\mathcal O}^D}
\def\cON{{\mathcal O}^N}
\def\cOI{{\mathcal O}^I}
\def\dvg{{\rm div}}
\def\IntO{\int\limits_\Omega}
\begin{document}

\title{Exact constants in Poincar\'e type inequalities
 for functions with zero mean  boundary traces}

\author{A. I. Nazarov}\thanks
{V.A. Steklov Institute of Mathematics in St. Petersburg,
191023, Fontanka 27, St.-Petersburg, Russia
and St. Petersburg State University,
198504, Universitetskii pr. 28, St.-Petersburg, Russia.}
\author{S. I. Repin}
\thanks{V.A. Steklov Institute of Mathematics in St. Petersburg,
191023, Fontanka 27, St.-Petersburg, Russia
and University of Jyv\"askyl\"a, P.O. Box 35 (Agora), FIN-40014 , Jyv\"askyl\"a,  Finland}

\begin{abstract}
In this paper, we investigate
Poincar\'e type inequalities for the functions having zero mean value
on the whole boundary of a Lipschitz domain  or on a measurable part of the boundary.
We find exact and easily computable constants in these inequalities
 for some basic domains (rectangles,
cubes, and right triangles) and discuss  applications of the inequalities
to quantitative analysis of partial differential equations.
\end{abstract}

%\begin{AMS}
%35P15,26D10,46E35
%\end{AMS}
\maketitle

\section{Introduction}
Let $\Omega$ be a connected bounded domain in $\Rd$ with Lipschitz boundary $\partial\Omega$.
The classical Poincar\'e inequality %for functions with zero mean
reads
\begin{eqnarray}
&&\label{1.1}
\|w\|_{2,\Omega}\leq C_P(\Omega)\|\nabla w\|_{2,\Omega},\qquad
\forall w\in \widetilde H^1(\Omega),
\end{eqnarray}
where
$$
\widetilde H^1(\Omega):=\left\{w\in H^1(\Omega)\,\mid\,\mean{w}_\Omega=0\right\}.
$$
Here and later on $\mean{g}_\omega$ denotes the mean value of $g$ on the set $\omega$ 
while $\|\cdot\|_{2,\omega}$ stands for the norm in $L^2(\omega)$.

The inequality (\ref{1.1}) was established by H. Poincar\'e (\cite{Poin1}, \cite{Poin2})
for a certain class of domains. V.A. Steklov \cite{St1} found that the sharp constant
in (\ref{1.1}) is equal to
$\lambda^{-\frac 12}$, where $\lambda$ is the smallest positive eigenvalue of the problem
\begin{equation}\label{1.3}
\aligned
-\Delta u=&\lambda u&&\mbox{in}\quad \Omega;\\
\partial_{\bf n}u=&0&&\mbox{on}\quad \partial\Omega.
\endaligned
\end{equation}
L.E.~Payne and H.F.~Weinberger \cite{PW} proved that for convex domains in $\Rd$ this constant
satisfies the estimate
$
C_P(\Omega)\leq \frac{{\rm diam}\Omega}{\pi}.
$
Estimates of constants in Poincar\'e type inequalities have been studied
by many authors (see, e.g.,  \cite{ENT,Fuchs,Ka,KiLi, NKP} and other publications cited therein).

In this paper,
 we consider estimates similar to  (\ref{1.1}), for the functions having zero mean
 on a certain part of the boundary (or on the whole boundary), namely,
\begin{eqnarray}
&&\label{eq1Omega}
\|w\|_{2,\Omega}\leq C_1(\Omega,\Gamma)\|\nabla w\|_{2,\Omega},\qquad
\forall w\in \wt H^1(\Omega,\Gamma),\\
\label{eq1Gamma}
&&\|w\|_{2,\Gamma}\leq C_2(\Omega,\Gamma)\|\nabla w\|_{2,\Omega},\qquad
\forall w\in\wt  H^1(\Omega,\Gamma),
\end{eqnarray}
where $\Gamma$ is a measurable part of $\partial\Omega$ (we assume that $(d-1)$-measure of
$\Gamma$ is positive),
\begin{eqnarray*}
\wt H^1(\Omega,\Gamma)=\left\{w\in H^1(\Omega)\,\mid\,\mean{w}_{\Gamma} =0\right\}.
\end{eqnarray*}
Since the quantity $\|\nabla w\|_{2,\Omega}+|\int_\Gamma w\,ds|$ is a norm equivalent to the
original norm of $H^1(\Omega)$, existence of the constants $C_1(\Omega,\Gamma)$ and
$C_2(\Omega,\Gamma)$ is easy to prove.

We recall  that the extremal
function in (\ref{eq1Omega}) is an eigenfunction
$u\in \wt H^1(\Omega,\Gamma)$ of the boundary value problem
\begin{equation}
\aligned
\label{1.5}
-\Delta u=\,\lambda u\qquad&\mbox{in}\quad \Omega;\\
\partial_{\bf n}u=\,\mu\equiv-\frac {\lambda}{|\Gamma|}\int_{\Omega} u\,dx\quad&
{\rm on} \;\quad\Gamma;\\
\partial_{\bf n}u=0\quad&\mbox{on}\quad  \partial\Omega\setminus\Gamma
\endaligned
\end{equation}
corresponding to the least eigenvalue $\lambda>0$.
This fact  follows from standard variational arguments.

Analogously, the extremal function in (\ref{eq1Gamma}) is an eigenfunction
$u\in \wt H^1(\Omega,\Gamma)$ of the boundary value problem
\begin{equation}\label{1.6}
\aligned
\Delta u=\, &0\qquad\mbox{in}\quad\Omega;\\
\partial_{\bf n}u=\, &\lambda u \quad\mbox{on}\quad  \Gamma;
\qquad \partial_{\bf n}u=0\quad\mbox{on}\quad  \partial\Omega\setminus\Gamma,
\endaligned
\end{equation}
which corresponds to the least positive eigenvalue.\medskip

The problem (\ref{1.6}) in the case $\Gamma=\partial\Omega$ was introduced by V. A. Steklov
in \cite{St}. We note also that for $n = 2$ $(n = 3)$ and a particular choice of $\Gamma$,
eigenvalues of the problem (\ref{1.6}) give so-called {\it sloshing frequencies} of free
oscillations of a liquid in a channel (container, respectively); see, for example, \cite{Lamb},
ch. IX.\medskip

It is easy to show that the eigenfunctions
of the problems (\ref{1.5}) and (\ref{1.6}) form complete orthogonal systems in
$L^2(\Omega)$ and in $L^2(\Gamma)$, respectively, and the exact constants
in (\ref{eq1Omega}) and (\ref{eq1Gamma}) are equal to $\lambda^{-\frac 12}$, where
$\lambda$ is the minimal positive eigenvalue. For simply connected domains in ${\mathbb R}^2$,
 eigenvalues of the problem (\ref{1.6})  were estimated from above in  \cite{We,HPS}
 (in fact these estimates are valid for a wider class of problems, which contain
 a positive weight function on $\Gamma$).
 In \cite{DS}, it was shown that for doubly connected domains in ${\mathbb R}^2$ the
 symmetrization method yields certain estimates of the smallest positive
 eigenvalue from below. \medskip

Our goal is to find exact values of $ C_1(\Omega,\Gamma)$ and $ C_2(\Omega,\Gamma)$
for certain basic domains (rectangular domains and  right  triangles).
Finding the constants amounts to finding the corresponding minimal positive eigenvalues.
In some cases, this goal can be achieved by standard methods.
For example, if $\Omega$ is a rectangle (parallelepiped) and $\Gamma$ coincides
with one side of $\Omega$, then all the eigenfunctions of (\ref{1.5}) and (\ref{1.6})
can be constructed explicitly (by means of the separation of variables method).

However, these type arguments do not work in other cases because
it is impossible to construct explicitly {\em all} eigenfunctions. The goal can be achieved
 if we find a suitable eigenfunction and  prove that it
indeed corresponds to the minimal positive eigenvalue. Proving this fact
requires rather sophisticated argumentation, which combines analytical
and geometrical analysis. Moreover, the proof strongly depends
on the domain and boundary conditions so that  arguments used for rectangular
domains and $\Gamma=\partial\Omega$ considered in Section 2
 differ  from those used for triangles considered in Section 3.

Section 4  presents  an example, which shows that the constants are valuable
for quantitative analysis of differential equations. We consider
two elliptic boundary value problems with different boundary conditions and source terms. The
second problem is a certain simplification of the first one, namely,  if the functions presenting
source terms and Dirichlet or Neumann boundary conditions are complicated, then they are replaced
by simple (piecewise constant or  affine) functions. Such a simplified problem is more convenient
for numerical methods because simplified boundary conditions can be exactly satisfied by simple
approximations and elementwise integrals can be sharply computed by simple quadrature formulas.
We deduce an easily computable bound of the difference between two exact solutions.
It contains constants in (\ref{eq1Omega}), (\ref{eq1Gamma}), and Poincar\'e
 inequalities. Computation of the bound is simple and
 needs only integration of known functions. In practice, this estimate
is useful if a certain desired tolerance level is stated a priori.
The estimate
suggests possible simplifications of data and  suitable
meshes
for which the difference is smaller than the tolerance level. Then, in computations
(e.g., in the finite element method) we can use the simplified
problem instead of the original one.
 Our analysis is performed for a particular linear
elliptic equation, but similar arguments  lead to similar estimates for other
differential equations associated with the pair of conjugate operators ${\rm grad}$ and $-\dvg$.
Other applications of  (\ref{eq1Omega}) and (\ref{eq1Gamma}) are related to a posteriori error
estimation methods for partial differential equations, where computable bounds between exact
solutions and approximations often involve constants in Poincar\'e type inequalities (see
\cite{ReGruyter}).

\section{Exact constants for rectangles and parallelepipedes}
In this section, we deal with rectangles and parallelepipedes. First, we consider the simplest
 case where $\Gamma$ coincides with one side of $\Omega$.

\begin{Proposition}
\label{rectside}
{\bf 1}. Let $\Omega=(0,h_1)\times (0,h_2)$ and $\Gamma=\{x_1=0,x_2\in [0,h_2]\}$. Then the
sharp constants in  (\ref{eq1Omega}) and (\ref{eq1Gamma})  are equal to
$\frac 1{\pi}\max\{2h_1;h_2\}$ and
$\left(\frac{\pi}{h_2}\tanh(\frac{\pi h_1}{h_2})\right)^{-\frac 12}$,
respectively.\medskip

{\bf 2}. Let $\Omega=(0,h_1)\times (0,h_2)\times (0,h_3)$ and
 $\Gamma=\partial\Omega\cap\{x_1=0%, x_2\in [0, h_2], x_3\in [0, h_3]
\}$.
 Then the sharp constants in
(\ref{eq1Omega}) and (\ref{eq1Gamma}) are equal to
$$\frac 1{\pi}\max\{2h_1;h_2;h_3\}\quad{\rm and}\quad\left(\frac{\pi}{\max\{h_2;h_3\}}
\tanh(\frac{\pi h_1}{\max\{h_2;h_3\}})\right)^{-\frac 12},
$$ respectively.
\end{Proposition}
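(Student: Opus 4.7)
The plan is to construct all eigenfunctions of problems (\ref{1.5}) and (\ref{1.6}) explicitly via separation of variables, identify the smallest positive eigenvalue in each case, and invoke the variational characterisation $C_i=\lambda^{-1/2}$ recalled in the introduction.

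For problem (\ref{1.5}) on the rectangle I seek $u(x_1,x_2)=X(x_1)Y(x_2)$. The homogeneous Neumann conditions at $x_2=0,h_2$ force $Y(x_2)=\cos(k\pi x_2/h_2)$, $k\geq0$. The boundary condition on $\Gamma$ reads $-X'(0)Y(x_2)=\mu\in\R$, so either $k\geq1$ with $X'(0)=0=\mu$, in which case $X'(h_1)=0$ gives $X(x_1)=\cos(m\pi x_1/h_1)$ and $\lambda_{m,k}=(m\pi/h_1)^{2}+(k\pi/h_2)^{2}$ (here $\mean{u}_\Gamma=0$ holds automatically), or else $k=0$, in which case $\mean{u}_\Gamma=0$ forces $X(0)=0$ and together with $X'(h_1)=0$ yields $X(x_1)=\sin((m+\tfrac12)\pi x_1/h_1)$ and $\lambda_{m}=((m+\tfrac12)\pi/h_1)^{2}$. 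The smallest value over both families, $\min\{(\pi/h_2)^{2},(\pi/(2h_1))^{2}\}$, produces the sharp constant $\pi^{-1}\max\{2h_1,h_2\}$.

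For problem (\ref{1.6}) the same separation again yields $Y=\cos(k\pi x_2/h_2)$; harmonicity combined with $X'(h_1)=0$ determines $X$ up to a scalar as a shifted hyperbolic cosine, and the Steklov boundary condition $-X'(0)=\lambda X(0)$ produces, after a short computation, the eigenvalue $\lambda_k=(k\pi/h_2)\tanh(k\pi h_1/h_2)$ for $k\geq1$ (the case $k=0$ gives only constants, excluded by $\mean{u}_\Gamma=0$). Since $\alpha\mapsto\alpha\tanh(\alpha h_1)$ is strictly increasing on $(0,\infty)$, the minimum is attained at $k=1$, giving the stated constant. Part 2 follows the same template with an additional separation in $x_3$ contributing a factor $\cos(l\pi x_3/h_3)$: in (\ref{1.5}) the minimum over $(k,l)\neq(0,0)$ of $(k\pi/h_2)^2+(l\pi/h_3)^2$ equals $(\pi/\max\{h_2,h_3\})^2$, which competes with $(\pi/(2h_1))^2$ from the $x_1$-only family, and in (\ref{1.6}) the eigenvalues $\alpha_{k,l}\tanh(\alpha_{k,l}h_1)$ with $\alpha_{k,l}=\pi\sqrt{(k/h_2)^2+(l/h_3)^2}$ are minimised at the same $(k,l)$ by the monotonicity of $\alpha\tanh(\alpha h_1)$.

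The main obstacle is verifying that the explicit family above exhausts the spectrum, so that the smallest positive eigenvalue has truly been captured. I would establish this by a tensor-product Fourier expansion argument: for each nonzero transverse index $(k,l)$ the cosine basis $\{\cos(m\pi x_1/h_1)\}_{m\geq0}$ is complete in $L^2(0,h_1)$, whereas in the $(k,l)=(0,0)$ mode the mean-zero constraint on $\Gamma$ collapses to the pointwise condition $X(0)=0$, for which $\{\sin((m+\tfrac12)\pi x_1/h_1)\}_{m\geq0}$ is a complete Sturm-Liouville basis. Tensoring with the cosines in $(x_2,x_3)$ produces a complete orthogonal system in $\wt H^1(\O,\Gamma)$, so the spectra listed above are indeed exhaustive and the minima found are the true least positive eigenvalues.
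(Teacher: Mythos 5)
Your proposal is correct and follows essentially the same route as the paper: separation of variables, explicit identification of the two families of eigenfunctions (the pure Neumann modes with $\mu=0$ for nonzero transverse index and the half-integer sine modes $\sin((m+\tfrac12)\pi x_1/h_1)$ in the zero mode), a completeness check via tensor-product Sturm--Liouville bases, and comparison of the candidate least eigenvalues. The paper's own proof is just a terser version of this, writing out only the eigenfunctions of (\ref{1.6}) for the rectangle and declaring the remaining cases "quite similar"; your write-up supplies the details the paper omits, including the consistency of $\mu=-\frac{\lambda}{|\Gamma|}\int_\Omega u\,dx$ in the $k=0$ branch.
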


\begin{proof}
In this case, all the eigenfunctions of the problems
(\ref{1.5}) and (\ref{1.6}) can be found by the separation of variables. For instance,
the eigenfunctions of (\ref{1.6}) in the rectangle are defined by the relation
\begin{equation*}
u_k(x)=\cos\left(\frac{\pi k}{h_2}x_2\right)\cosh\left(\frac{\pi k}{h_2}(x_1-h_1)\right),\qquad
k=0,1,2,\dots
\end{equation*}
and evidently form a complete orthogonal system in $L^2(\Gamma)$. Therefore, the corresponding
least eigenvalue of the problem (\ref{1.6}) is
$$\lambda_1=\frac{\pi}{h_2}\tanh(\frac{\pi h_1}{h_2}).
$$

The proof of other statements is quite similar, and we omit it. \end{proof}

Now we turn to the case $\Gamma=\partial\Omega$. The problem in a rectangle is symmetric
with respect to two axes. Therefore,  it is convenient to select the coordinate system such that
$\Omega=(-\frac{h_1}2,\frac{h_1}2)\times (-\frac{h_2}2,\frac{h_2}2)$  (see Fig.~1).

\begin{Theorem}
\label{rectfull1}
Let $\Omega=(-\frac{h_1}2,\frac{h_1}2)\times (-\frac{h_2}2,\frac{h_2}2)$ and
$\Gamma=\partial\Omega$. Then the sharp constant in (\ref{eq1Omega}) is equal
to $\frac 1{\pi}\max\{h_1;h_2\}$.
\end{Theorem}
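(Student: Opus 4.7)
The plan is to show that the least positive eigenvalue of (\ref{1.5}) with $\Gamma=\partial\Omega$ equals $\pi^2/\max\{h_1,h_2\}^2$, which is equivalent to the stated formula for $C_1$. Assume without loss of generality that $h_1\geq h_2$. For the upper bound I test with $u(x_1,x_2)=\sin(\pi x_1/h_1)$: then $-\Delta u=(\pi/h_1)^2 u$, $\partial_{\mathbf n}u\equiv 0$ on $\partial\Omega$ (so $\mu=0$ automatically), and oddness in $x_1$ yields $\int_\Omega u\,dx=\int_{\partial\Omega}u\,ds=0$. Hence $u\in\wt H^1(\Omega,\partial\Omega)$ is an eigenfunction with eigenvalue $(\pi/h_1)^2$, giving $C_1\leq h_1/\pi$.

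For the matching lower bound I exploit invariance of $\Omega$ and $\Gamma$ under $x_i\mapsto -x_i$ to decompose the spectrum by parity in each variable. If an eigenfunction is odd in at least one variable, symmetry forces $\int_\Omega u\,dx=0$, hence $\mu=0$ in (\ref{1.5}), so $u$ is a pure Neumann eigenfunction. Separation of variables enumerates all such functions, each is automatically of zero boundary mean by parity, and the smallest eigenvalue in this family is $(\pi/h_1)^2$ (attained by the candidate above, using $h_1\geq h_2$).

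The subtle case is that of eigenfunctions even in both variables with $\mu\neq 0$. Expand such a $u=\sum_k c_k\phi_k$ in the $L^2$-orthonormal basis of even-even Neumann eigenfunctions with eigenvalues $\mu_k$. Testing $-\Delta u=\lambda u$ against $\phi_k$ and integrating by parts yields $(\mu_k-\lambda)c_k=\mu\alpha_k$ with $\alpha_k:=\int_{\partial\Omega}\phi_k\,ds$, so the admissibility condition $\sum_k c_k\alpha_k=0$ becomes the characteristic equation
\begin{equation*}
F(\lambda):=\sum_k\frac{\alpha_k^2}{\mu_k-\lambda}=0.
\end{equation*}
A direct computation shows that $\alpha_k\neq 0$ only for the constant mode and the single-variable cosines $\cos(2\pi p x_1/h_1)$ and $\cos(2\pi q x_2/h_2)$, so the mixed modes drop out. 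On $(0,(2\pi/h_1)^2)$ the function $F$ is strictly increasing from $-\infty$ to $+\infty$ and thus has a unique root there.

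The core analytic step is to evaluate $F$ at $\lambda=(\pi/h_1)^2$ in closed form via the Mittag--Leffler identity $\sum_{n\geq 1}(n^2-z^2)^{-1}=(2z^2)^{-1}-\pi\cot(\pi z)/(2z)$. Summing the three explicit contributions, tracking all normalization constants, and using the algebraic simplification $-(h_1+h_2)^2+h_1^2+h_2^2=-2h_1h_2$ to cancel the leading $h_1/h_2$ terms produces
\begin{equation*}
F\bigl((\pi/h_1)^2\bigr)=-\frac{2h_1^2}{\pi}\Bigl[\frac{4}{\pi}+\cot\!\bigl(\tfrac{\pi h_2}{2h_1}\bigr)\Bigr]<0,
\end{equation*}
because $\pi h_2/(2h_1)\in(0,\pi/2]$ makes the cotangent nonnegative. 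By monotonicity of $F$, its unique root in $(0,(2\pi/h_1)^2)$ lies strictly above $(\pi/h_1)^2$, so every even-even eigenvalue with $\mu\neq 0$ exceeds the candidate. Combined with the odd case this gives $\lambda\geq(\pi/h_1)^2$, matching the upper bound. The main obstacle is exactly this closed-form evaluation: the algebraic cancellations in $F((\pi/h_1)^2)$ are delicate, and the required sign relies on the assumption $h_1\geq h_2$ entering through the cotangent.
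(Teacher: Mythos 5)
Your argument is correct and arrives at the right answer, but the treatment of the doubly even modes is organized differently from the paper's. The paper splits by parity exactly as you do and disposes of the odd cases the same way (they reduce to pure Neumann eigenfunctions, least eigenvalue $(\pi/\max\{h_1;h_2\})^2$). For the even--even case it passes to the quarter rectangle $\Omega^{++}$, invokes the variational interlacing principle $\widetilde\lambda_k^e\le\lambda_k^e\le\widetilde\lambda_{k+1}^e$ from \cite{BS3} (the constraint $\mean{v}_{\Gamma^+}=0$ has codimension one) so that any eigenvalue located below the first nonzero Neumann eigenvalue of $\Omega^{++}$ must be the smallest one, then exhibits the explicit eigenfunction $v_0(x)=\cos(\omega_0 x_1)/\sin(\tfrac{\omega_0h_1}2)+\cos(\omega_0 x_2)/\sin(\tfrac{\omega_0h_2}2)$ and checks that the root $\omega_0$ of (\ref{cot}) exceeds $\pi/\max\{h_1;h_2\}$. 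Your secular-equation route replaces the interlacing step by the expansion in the even--even Neumann basis: the function $F(\lambda)=\sum_k\alpha_k^2/(\mu_k-\lambda)$ is, after the normalization bookkeeping, exactly $-\tfrac4\omega$ times the left-hand side of (\ref{cot}) with $\omega=\sqrt\lambda$ (the $1/\lambda$ coefficients combine to $-8$ precisely via your identity $-(h_1+h_2)^2+h_1^2+h_2^2=-2h_1h_2$, and your closed form for $F\bigl((\pi/h_1)^2\bigr)$ agrees with evaluating (\ref{cot}) at $\omega=\pi/h_1$). So the two proofs hinge on the same transcendental equation and the same sign check at $\omega=\pi/h_1$; yours is more self-contained (no appeal to min--max interlacing) at the price of justifying the eigenfunction expansion, the vanishing of $\alpha_k$ for the mixed modes, and the convergence and monotonicity of $F$ on $(0,(2\pi/h_1)^2)$ --- all of which hold.

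Two small repairs. First, the inequality at the end of your first paragraph points the wrong way: exhibiting an admissible eigenfunction with eigenvalue $(\pi/h_1)^2$ shows $\lambda_{\min}\le(\pi/h_1)^2$, hence $C_1\ge h_1/\pi$; the bound $C_1\le h_1/\pi$ is what the remainder of the proof supplies. Second, your case split omits eigenfunctions that are even in both variables with $\mu=0$: these are even--even Neumann eigenfunctions with zero mean over $\Omega$ (e.g.\ the mixed cosine products), and all their positive eigenvalues are at least $(2\pi/h_1)^2>(\pi/h_1)^2$, so the omission is harmless but should be stated.
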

\begin{proof}
Due to the biaxial symmetry all the eigenfunctions of (\ref{1.5})
and (\ref{1.6}) can be chosen either even or odd with respect to the axes
$x_1$ and $x_2$.
First, we consider the eigenfunctions of (\ref{1.5}), which are odd with respect to $x_1$.
In this case, $\mu=0$ and we arrive at the following problem:
\begin{equation}\label{eq1.1odd}
\aligned
-\Delta u=&\lambda u&&\mbox{in}\quad\Omega^+:=(0,\frac{h_1}2)\times (-\frac{h_2}2,\frac{h_2}2),\\
 u=&0 &&\mbox{on}\quad \{x_1=0\}\cap\Omega,\qquad
\partial_{\bf n}u=0\quad\mbox{on}\quad  \partial\Omega^+\setminus\{x_1=0\}.
\endaligned
\end{equation}
It is easy to see that the eigenfunctions  of (\ref{eq1.1odd})
are defined by the relations
\begin{equation*}
\aligned
u_{km}^{(1)}(x)&=\sin\left(\frac{\pi(2k+1)}{h_1}x_1\right)
\cos\left(\frac{2\pi m}{h_2}x_2\right),& \\
u_{km}^{(2)}(x)&=\sin\left(\frac{\pi(2k+1)}{h_1}x_1\right)
\sin\left(\frac{\pi(2m+1)}{h_2}x_2\right),& k,m=0,1,\dots\;.
\endaligned
\end{equation*}
 They form a system
of orthogonal functions, which is complete  in $L^2(\Omega^+)$. Hence, we conclude that the least
eigenvalue of the problem (\ref{eq1.1odd}) is
$\lambda_{00}^{(1)}=\left(\frac{\pi}{h_1}\right)^2.$

\begin{figure}[h!]
\label{domain+}
\centerline{\includegraphics[width=2.1in,height=1.24in]{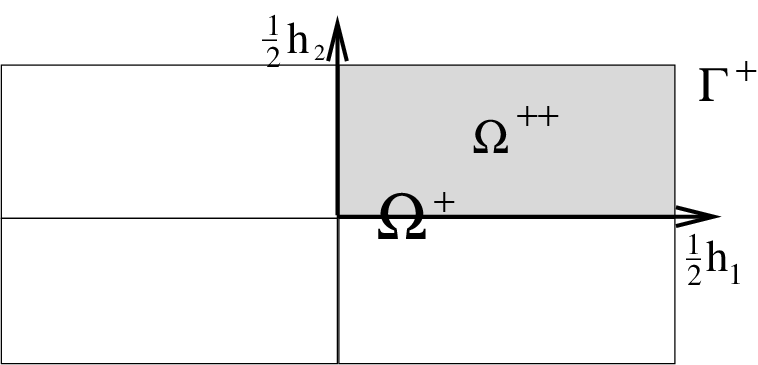}}
\caption{ $\Omega$, $\Omega^+$,  $\Omega^{++}$, and $\Gamma^+$.}
\end{figure}

All eigenfunctions of the problem (\ref{1.5}), which are odd with respect to $x_2$
can be constructed quite similarly and the corresponding least eigenvalue
is
$\left(\frac{\pi}{h_2}\right)^2$.

It remains to consider the eigenfunctions, which are even with respect to both variables.
 They belong to the space $\wt H^1(\Omega^{++},\Gamma^+)$, where
\begin{equation*}
\Omega^{++}:=\left(0,\frac{h_1}{2}\right)\times \left(0,\frac{h_2}{2}\right)\qquad
{\rm and}\qquad\Gamma^+=\Gamma\cap\partial\Omega^{++}.
\end{equation*}
In this case, we need to
solve the problem
\begin{equation}\label{eq1.1even}
\aligned
-\Delta u&=\lambda u &&\mbox{in}\quad  \Omega^{++};\\
\partial_{\bf n}u&=\mu &&\mbox{on}\quad  \Gamma^+;
\qquad \partial_{\bf n}u=0\quad  \mbox{on}\quad  \partial\Omega^{++}\setminus\Gamma^+.
\endaligned
\end{equation}
The eigenvalues $\lambda_k^e$ of the problem (\ref{eq1.1even})
(enumerated in the increasing order
and repeated according to their multiplicity)
 are critical values of the Rayleigh quotient
\begin{eqnarray}
\label{quotient++}
Q[v]\equiv\frac {\|\nabla v\|_{2,\Omega^{++}}^2}{\|v\|_{2,\Omega^{++}}^2},
\end{eqnarray}
on the space $\wt H^1(\Omega^{++},\Gamma^+)$.

Consider the functional $Q$ on the whole space $H^1(\Omega^{++})$.
By the variational principle (see, e.g., \cite{BS3}, (1.15)),
its critical values $\widetilde\lambda_k^e$
enumerated in the increasing order and repeated in accordance with
  their multiplicity satisfy
the relation\footnote{Note that $\wt H^1(\Omega^{++},\Gamma^+)$
has codimension $1$ in $H^1(\Omega^{++})$.}
$\widetilde\lambda_k^e\le \lambda_k^e\le \widetilde\lambda_{k+1}^e$.
Therefore, if there exists an eigenvalue of the
problem (\ref{eq1.1even}) in the interval
 $(\widetilde\lambda_0^e,\widetilde\lambda_1^e)$,
 then it must be $\lambda_0^e$.

Note that $\widetilde\lambda_k^e$ are eigenvalues
of the conventional Neumann problem
\begin{equation*}
-\Delta u=\lambda u\quad  \mbox{in}\quad  \Omega^{++};\qquad \partial_{\bf n}u=0\quad
\mbox{on}\quad  \partial\Omega^{++},
\end{equation*}
and thus, $\widetilde\lambda_0^e=0$, $\widetilde\lambda_1^e=\min\{\big(\frac{2\pi}{h_1}\big)^2;
\big(\frac{2\pi}{h_2}\big)^2\}$.

Let us consider  the equation
\begin{equation}
\label{cot}
\frac {h_1}2\cot\Big(\frac {\omega h_2}2\Big)+
\frac {h_2}2\cot\Big(\frac {\omega h_1}2\Big)+\frac 2{\omega}=0.
\end{equation}
It is easy to see that  the function in the
left-hand side of (\ref{cot}) decreases from $+\infty$ to $-\infty$ in the interval
 $(0,\min\{\frac{2\pi}{h_1};\frac{2\pi}{h_2}\})$,
so that the equation
has a unique solution $\omega_0$ in this interval. The equation (\ref{cot})
arises in our analysis by the following reasons.
Direct calculation shows that the function
$$
v_0(x)=\frac {\cos(\omega_0 x_1)}{\sin(\frac {\omega_0 h_1}2)}+
\frac {\cos(\omega_0 x_2)}{\sin(\frac {\omega_0 h_2}2)}
$$
solves the problem (\ref{eq1.1even}) with $\lambda=\omega_0^2$.
Then, the condition $\int_{\Gamma_+}v_0\,ds=0$ infers the equation  (\ref{cot}).

Thus, we conclude that $\lambda_0^e=\omega_0^2$. However, it is easy to see that
$$\omega_0>\min\{\frac{\pi}{h_1};\frac{\pi}{h_2}\}.
$$
Therefore, the least eigenvalue of the problem (\ref{1.5}) is
$\min\{\big(\frac{\pi}{h_1}\big)^2;\big(\frac{\pi}{h_2}\big)^2\}$,
and the statement follows.\end{proof}

\begin{Theorem}
\label{rectfull2}
 If $\Omega=(-\frac{h_1}2,\frac{h_1}2)\times (-\frac{h_2}2,\frac{h_2}2)$ and
 $\Gamma=\partial\Omega$, then the smallest constant $C_2(\Omega,\Gamma)$
in (\ref{eq1Gamma}) equals
$\left(\frac {2z_0(\alpha_0)}{\sqrt{h_1h_2}}
\tanh(\frac {z_0(\alpha_0)}{\alpha_0})\right)^{-\frac 12}$, where
$\alpha_0=\sqrt{\frac {\max\{h_1;h_2\}}{\min\{h_1;h_2\}}}$ and
$z_0(\alpha)$ is a unique root
of the equation
\begin{equation}
\label{tanh}
\tanh\left(\frac {z}{\alpha}\right)\tan(z\alpha)=1,
\end{equation}
such that $0<z_0\alpha<\frac {\pi}2$.
\end{Theorem}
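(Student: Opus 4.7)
The plan is to follow the strategy of Theorem~\ref{rectfull1}: exploit the biaxial symmetry of $\Omega$ to split $\wt H^1(\Omega,\partial\Omega)$ into the four parity subspaces under the reflections $x_1\mapsto -x_1$ and $x_2\mapsto -x_2$, determine the smallest positive Steklov eigenvalue within each, and take the minimum. In three of the four subspaces (functions odd in $x_1$, odd in $x_2$, or odd in both) the constraint $\int_{\partial\Omega} u\,ds=0$ is automatic, so the Steklov problem \eqref{1.6} reduces on the quarter-rectangle $\Omega^{++}=(0,\frac{h_1}{2})\times(0,\frac{h_2}{2})$ to a mixed problem with either $u=0$ or $\partial_{\bf n}u=0$ on the interior symmetry edges and $\partial_{\bf n}u=\lambda u$ on $\Gamma^+$. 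The fully-even subspace retains the constraint $\int_{\Gamma^+}u\,ds=0$ and will be treated by the codimension-$1$ min-max argument used around \eqref{quotient++}.

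Assume without loss of generality that $h_1\ge h_2$, so $\alpha_0=\sqrt{h_1/h_2}\ge 1$. The critical subspace turns out to be that of functions odd in $x_1$ and even in $x_2$. Separation of variables $u=X(x_1)Y(x_2)$ with $X(0)=0$, $Y'(0)=0$ produces exactly two harmonic building blocks, $\sin(kx_1)\cosh(kx_2)$ and $\sinh(kx_1)\cos(kx_2)$; matching the Steklov condition on the two non-symmetry edges of $\Omega^{++}$ forces the eigenfunction to be a scalar multiple of a single block. For $u=\sin(kx_1)\cosh(kx_2)$ evaluating $\partial_{\bf n}u/u$ on $\{x_1=h_1/2\}$ and on $\{x_2=h_2/2\}$ yields
\begin{equation*}
\lambda=k\cot\Big(\frac{kh_1}{2}\Big)=k\tanh\Big(\frac{kh_2}{2}\Big),\qquad \frac{kh_1}{2}\in\Big(0,\frac{\pi}{2}\Big),
\end{equation*}
which is the dispersion relation $\tan(kh_1/2)\tanh(kh_2/2)=1$. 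The substitution $z=k\sqrt{h_1h_2}/2$ converts it to \eqref{tanh} with $\alpha=\alpha_0$, producing the eigenvalue $\lambda=(2z_0(\alpha_0)/\sqrt{h_1h_2})\tanh(z_0(\alpha_0)/\alpha_0)$. Existence and uniqueness of $z_0(\alpha_0)\in(0,\pi/(2\alpha_0))$ follow from the strict monotonicity in $z$ of both factors on the left of \eqref{tanh}; the second building block $\sinh(kx_1)\cos(kx_2)$ yields a positive $\lambda$ only once $kh_2/2>\pi/2$ and is therefore strictly larger.

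It remains to rule out the other three subspaces. The even-in-$x_1$, odd-in-$x_2$ subspace produces the analogous dispersion equation with $h_1,h_2$ swapped, i.e.\ \eqref{tanh} with $\alpha_0$ replaced by $1/\alpha_0\le 1$; a monotonicity check of $\alpha\mapsto z_0(\alpha)\tanh(z_0(\alpha)/\alpha)$ on $[1,\infty)$ confirms that $\alpha_0\ge 1$ gives the minimizing choice. The odd-in-both subspace yields the dispersion equations $\tan(kh_1/2)=\tanh(kh_2/2)$ or $\tan(kh_2/2)=\tanh(kh_1/2)$, each giving an eigenvalue of the form $k\coth(kh_\ast/2)\ge k$ at their smallest positive root, and one verifies by a direct pointwise comparison that this exceeds the critical eigenvalue above. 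Finally, for the even-even subspace, the codimension-$1$ min-max principle applied as in Theorem~\ref{rectfull1} confines the constrained first eigenvalue $\lambda_0^e$ to $(0,\widetilde\lambda_1^e)$, where $\widetilde\lambda_1^e$ is the first positive eigenvalue of the unconstrained Steklov problem on $\Omega^{++}$ (with $\partial_{\bf n}u=0$ on the symmetry edges); $\widetilde\lambda_1^e$ is computed explicitly by a separation-of-variables ansatz $\cos(kx_1)\cosh(kx_2)$ satisfying $\tan(kh_1/2)=-\tanh(kh_2/2)$ with $kh_1/2>\pi/2$, and an elementary estimate shows it exceeds the critical value. The principal obstacle is precisely this cross-class comparison: none of the transcendental equations admits a closed-form root, so the argument must present each as $F(k)=1$ with $F$ strictly monotone on its relevant interval and compare the $F$'s pointwise at the root of \eqref{tanh}; the biaxial and $\alpha\leftrightarrow 1/\alpha$ symmetries reduce the number of cases to a manageable few.
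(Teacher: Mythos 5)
Your overall architecture (parity decomposition, the candidate $\sin(\omega x_1)\cosh(\omega x_2)$, the dispersion relation reducing to \eqref{tanh}, and the monotonicity in $\alpha$) matches the paper's, but two steps have genuine gaps. First, in the odd classes you assert that separation of variables ``forces the eigenfunction to be a scalar multiple of a single block.'' This is unjustified: the mixed Steklov (sloshing) problem \eqref{eq1Gammaodd} on a rectangle does \emph{not} have all of its eigenfunctions in separated form --- this is precisely why one cannot simply list the spectrum --- so exhibiting a separated solution does not show it is the \emph{first} eigenfunction of its class. The paper closes this by a positivity argument: replacing $u$ by $|u(|x_1|,x_2)|\,{\rm sign}(x_1)$ preserves the Rayleigh quotient \eqref{quotientQ}, so by the maximum principle the minimizer over the odd-in-$x_1$ class is of constant sign in $\Omega^+$; by orthogonality in $L^2(\partial\Omega^+\setminus\{x_1=0\})$ only the first eigenfunction can be of constant sign, and the explicit candidate is positive, hence first. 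Some argument of this kind is indispensable. (Note also that the paper treats \emph{all} functions odd in $x_1$ --- including your odd--odd subclass --- in one stroke on the half-rectangle $\Omega^+$, which spares the separate odd--odd comparison you attempt.)

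Second, your disposal of the even--even class is logically reversed. The codimension-one min-max sandwich gives $\widetilde\lambda_0^e\le\lambda_0^e\le\widetilde\lambda_1^e$ with $\widetilde\lambda_0^e=0$, i.e.\ only an \emph{upper} bound on the constrained first eigenvalue; proving that $\widetilde\lambda_1^e$ exceeds the critical value therefore does not exclude $\lambda_0^e$ from lying \emph{below} it. (In Theorem \ref{rectfull1} the sandwich is used to \emph{identify} an explicitly constructed eigenvalue lying inside the gap as $\lambda_0^e$; you produce no explicit even--even eigenfunction, and the unconstrained reference problem you propose is itself a sloshing problem without closed-form spectrum.) The paper's route is entirely different: since $\|v\|^2_{2,\Gamma^+}\le\|v\|^2_{2,\partial\Omega^{++}}$, the even--even eigenvalues dominate those of the Steklov problem posed on all of $\partial\Omega^{++}$, which by scaling equal $2\lambda_k$; hence the first positive even--even eigenvalue is at least $2\lambda_1>\lambda_1$ and the class is excluded. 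You need either this scaling comparison or the nodal-domain argument sketched in the paper's footnote; as written, your even--even step fails.
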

\begin{proof}\,
First, we consider the eigenfunctions of  (\ref{1.6}), which are even with respect to both
variables. They belong to the space $\wt H^1(\Omega^{++},\Gamma^+)$ and solve the following
problem:
\begin{equation}\label{eq1Gammaeven}
\aligned
\Delta u=&0&& \mbox{in}\quad \Omega^{++};\\
\partial_{\bf n}u=&\lambda u&& \mbox{on}\quad \Gamma^+;
\qquad \partial_{\bf n}u=0\quad \mbox{on}\quad \partial\Omega^{++}\setminus\Gamma^+.
\endaligned
\end{equation}

Moreover, the eigenvalues $\Lambda_k^e$ of the problem (\ref{eq1Gammaeven})
(complemented by zero, enumerated in the increasing order, and repeated
according to their multiplicity)
are critical values of the Rayleigh quotient
$${\mathcal Q}^+[v]\equiv\frac {\|\nabla v\|_{2,\Omega^{++}}^2}{\|v\|_{2,\Gamma^+}^2}$$
over the space
$H^1(\Omega^{++})$. Consider another Rayleigh quotient
$$
\widetilde{\mathcal Q}[v]\equiv
\frac {\|\nabla v\|_{2,\Omega^{++}}^2}{\|v\|_{2,\partial\Omega^{++}}^2}
$$
on the same space.
Since $\widetilde{\mathcal Q}[v]\le{\mathcal Q}^+[v]$, by the variational
principle its critical values $\widetilde\Lambda_k^e$ (which are also
enumerated in the increasing order and repeated according to their multiplicity) satisfy the
relation $\widetilde\Lambda_k^e\le \Lambda_k^e$. However, by scaling
arguments $\widetilde\Lambda_k^e=2\lambda_k$, where $\lambda_k$ is the $k$-th eigenvalue of
the original problem (\ref{1.6}). Therefore,
an eigenfunction of (\ref{1.6}) even with respect to both variables cannot generate  the least
eigenvalue\footnote{We can suggest another proof of this fact, which is interesting by itself.
Let $u$ be a solution of (\ref{eq1Gammaeven}). We claim that at least one of sets
$\varpi_{\pm}=\Omega^{++}\cap\{u\gtrless0\}$ has a connected component which
 touches $\Gamma^+$ but does not touch one of the coordinate axes.
Indeed, consider a connected component of $\varpi_+$ touching
 $\Gamma^+$ (in view of the condition $\int_{\Gamma^+}u=0$, such a component exists).
If this component touches both axes then any connected component of $\varpi_-$
touching $\Gamma^+$ is separated either from $\{x_1=0\}$
or from $\{x_2=0\}$. To be definite, let $\varpi$ be a connected
component of $\varpi_-$ which touches $\Gamma^+$ but does not touch
$\{x_1=0\}$. Then the function
$$v(x_1,x_2)=u(|x_1|,|x_2|)\cdot\chi_{\varpi}(|x_1|,|x_2|)\cdot{\rm sign}(x_1)$$
 belongs to $\wt H^1(\Omega,\Gamma)$ and provides the same value $\Lambda$ of the Rayleigh quotient
(\ref{quotientQ})  as $u$. If $\Lambda$ minimizes ${\mathcal Q}$ on
$\wt H^1(\Omega,\Gamma)$ then $v$ should be a solution of (\ref{1.6})
 which is impossible by the maximum principle. Unfortunately, this argument
is purely 2-dimensional.}.

 Let us consider the eigenfunctions odd with respect to $x_1$. They lead to the following problem:
\begin{equation}\label{eq1Gammaodd}
\aligned
\Delta u=&0 &&\mbox{in}\quad  \Omega^+;\qquad u=0 \quad\mbox{on}\quad  \{x_1=0\};\\
\partial_{\bf n}u=&\lambda u &&\mbox{on}\quad \partial\Omega^+\setminus\{x_1=0\}.
\endaligned
\end{equation}
We claim that the eigenfunction of (\ref{eq1Gammaodd}) corresponding to the least
 eigenvalue must be of constant sign in
$\Omega^+$. Indeed, the function
$$v(x_1,x_2)=|u(|x_1|,x_2)|\cdot{\rm sign}(x_1)$$
belongs to $\wt H^1(\Omega,\Gamma)$
and provides the same value $\lambda$ of the Rayleigh quotient
\begin{equation}
\label{quotientQ}
{\mathcal Q}[v]\equiv\frac {\|\nabla v\|_{2,\Omega}^2}{\|v\|_{2,\Gamma}^2}
\end{equation}
as $u$.
 If $\lambda$ coincides with the infimum of $\mathcal Q$ on $\wt H^1(\Omega,\Gamma)$,
 then $v$ must be a solution of (\ref{1.6}). By the maximum principle,
 $v$ cannot vanish in $\Omega^+$, and the claim follows. Moreover, since the eigenfunctions
 of (\ref{eq1Gammaodd}) are orthogonal in $L^2(\partial\Omega^+\setminus\{x_1=0\})$, no
eigenfunction except the first one can be positive in $\Omega^+$.

Now we consider the equation
\begin{equation}\label{tanh1}
\tan\Big(\frac {\omega h_1}2\Big)\tanh\Big(\frac {\omega h_2}2\Big)=1,
\end{equation}
which obviously has a unique solution $\omega_1$ in the interval $(0,\frac{2\pi}{h_1})$.
Direct calculation shows that the function
$$
v_1(x)=\sin(\omega_1 x_1)\cosh(\omega_1 x_2)
$$
is positive in $\Omega^+$ and solves the problem (\ref{eq1Gammaodd}) with
$\lambda=\omega_1\tanh\left(\frac {\omega_1 h_2}2\right)$
(the equation (\ref{tanh1}) reflects the equality  of
 $\partial_{\bf n}u/u$ on sides of rectangle).
We  substitute
$z_0=\frac {\omega_1}2\sqrt{h_1h_2}$ and conclude that the least eigenvalue of (\ref{eq1Gammaodd})
is equal to $\frac {2z_0(\sqrt{\kappa})}{\sqrt{h_1h_2}}
 \tanh(\frac {z_0(\sqrt{\kappa})}{\sqrt{\kappa}})$,
where $z_0(\sqrt{\kappa})$ is the root of (\ref{tanh}) with
$\alpha=\sqrt{\kappa}:=\sqrt{\frac {h_1}{h_2}}$.

The eigenfunctions of  (\ref{1.6}), which are odd with respect to $x_2$,
can be constructed in a similar way. The corresponding least eigenvalue is
equal to $\frac {2z_0(\frac{1}{\sqrt{\kappa}})}{\sqrt{h_1h_2}}
\tanh(\sqrt{\kappa}z_0(\frac{1}{\sqrt{\kappa}}))$,
where $z_0(\frac{1}{\sqrt{\kappa}})$
is the root of (\ref{tanh}) with $\alpha=\frac{1}{\sqrt{\kappa}}$.

To complete the proof it suffices to show that the function
$f(\alpha)=z_0 \tanh(\frac {z_0}{\alpha})$ decreases on
$(0,+\infty)$. We claim that, in fact, $\alpha f(\alpha)$ is a decreasing function.
Indeed, differentiation of (\ref{tanh}) after some
transformations yields
$$
\frac d{d\alpha}\left(\alpha f(\alpha)\right)=
\frac {2z_0(1-\tanh^4(\frac {z_0}{\alpha}))}{1+\alpha^2-\tanh^2(\frac {z_0}{\alpha})(1-\alpha^2)}
\cdot \bigg[\frac {\tanh(\frac {z_0}{\alpha})}{1+\tanh^2(\frac {z_0}{\alpha})}-z_0\alpha\bigg].
$$
The fraction here is obviously positive. Further, (\ref{tanh})
 implies $z_0\alpha>\frac {\pi}4$. Therefore,
$$
\frac {\tanh(\frac {z_0}{\alpha})}{1+\tanh^2(\frac {z_0}{\alpha})}-z_0\alpha<
\frac 12-\frac {\pi}4<0,
$$
and the claim follows.
\end{proof}

Let us now consider three-dimensional
generalizations of Theorems \ref{rectfull1} and \ref{rectfull2}.

\begin{Theorem}
\label{4.1}
Let
%\begin{eqnarray}
$\Omega=(-\frac{h_1}2,\frac{h_1}2)
\times (-\frac{h_2}2,\frac{h_2}2)\times (-\frac{h_3}2,\frac{h_3}2)$
and let $\Gamma=\partial\Omega$.
%\end{eqnarray}
Then the exact constant in (\ref{eq1Omega}) is equal to $\frac 1{\pi}\max\{h_1;h_2;h_3\}$.
\end{Theorem}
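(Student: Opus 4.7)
The plan is to extend the strategy of Theorem~\ref{rectfull1} by exploiting the triaxial symmetry of $\Omega$ and $\Gamma=\partial\Omega$. Every eigenfunction of (\ref{1.5}) can be chosen to be either even or odd in each of $x_1$, $x_2$, $x_3$, so it suffices to find the least eigenvalue on each of the eight parity classes and take the minimum.

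For any class in which $u$ is odd in at least one variable, say $x_1$, the condition $\int_{\O} u\,dx=0$ forces $\mu=0$ in (\ref{1.5}), and $u$ vanishes on $\{x_1=0\}$. Restricting to $\Omega^+=(0,h_1/2)\times(-h_2/2,h_2/2)\times(-h_3/2,h_3/2)$, one obtains a mixed Dirichlet/Neumann problem whose eigenfunctions are, by separation of variables, products of $\sin(\pi(2k+1)x_1/h_1)$ with cosine or sine modes in $x_2$ and $x_3$. These form a complete orthogonal basis of $L^2(\Omega^+)$, and the least eigenvalue of the family is $(\pi/h_1)^2$. Symmetric analyses for odd parity in $x_2$ and $x_3$ give $(\pi/h_2)^2$ and $(\pi/h_3)^2$, so across all odd-parity classes the minimum is $(\pi/\max\{h_1,h_2,h_3\})^2$.

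It remains to show that no eigenfunction even in every variable produces a smaller eigenvalue. Such a function lives in $\wt H^1(\Omega^{+++},\Gamma^+)$, where $\Omega^{+++}:=(0,h_1/2)\times(0,h_2/2)\times(0,h_3/2)$ and $\Gamma^+:=\Gamma\cap\partial\Omega^{+++}$. As in the 2D proof, the eigenvalues $\lambda_k^e$ on this codimension-one subspace interlace with the pure Neumann eigenvalues $\wt\lambda_k^e$ on $\Omega^{+++}$; since $\wt\lambda_0^e=0$ and $\wt\lambda_1^e=\min\{(2\pi/h_i)^2\}$, any eigenvalue lying in $(0,\wt\lambda_1^e)$ must be $\lambda_0^e$. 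I would then verify that the explicit candidate
\begin{equation*}
v_0(x)=\sum_{i=1}^3\frac{\cos(\omega_0 x_i)}{\sin(\omega_0 h_i/2)}
\end{equation*}
solves $-\Delta v_0=\omega_0^2 v_0$, has zero normal derivative on each coordinate-plane face, and has constant normal derivative $-\omega_0$ on each outer face. Imposing the remaining constraint $\int_{\Gamma^+} v_0\,ds=0$ and evaluating the three face integrals yields the 3D analogue of (\ref{cot}):
\begin{equation*}
\frac{h_1 h_2 h_3}{4}\sum_{i=1}^3\frac{1}{h_i}\cot\Big(\frac{\omega h_i}{2}\Big)+\frac{h_1+h_2+h_3}{\omega}=0.
\end{equation*}
Its left-hand side decreases monotonically from $+\infty$ to $-\infty$ on $(0,\min\{2\pi/h_i\})$, so it has a unique root $\omega_0$ there, and $\lambda_0^e=\omega_0^2$.

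The crux is the bound $\omega_0>\pi/\max\{h_i\}$. Assuming $h_1=\max\{h_1,h_2,h_3\}$ and setting $\omega^\ast=\pi/h_1$, one has $\omega^\ast h_1/2=\pi/2$, so the first cotangent vanishes, while the other two have arguments in $(0,\pi/2]$ and are non-negative; since $(h_1+h_2+h_3)/\omega^\ast>0$, the left-hand side at $\omega^\ast$ is strictly positive, and monotonicity forces $\omega_0>\omega^\ast$. Combined with the odd-parity estimates, the minimal positive eigenvalue of (\ref{1.5}) is therefore exactly $(\pi/\max\{h_1,h_2,h_3\})^2$, yielding the constant $\frac{1}{\pi}\max\{h_1,h_2,h_3\}$. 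The main technical points are the explicit integration producing the three-variable transcendental equation and the sign check at $\omega^\ast$; both reduce cleanly because each relevant cotangent argument lies in $(0,\pi/2]$.
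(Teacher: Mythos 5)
Your proposal is correct and follows essentially the same route as the paper: the paper's proof of this theorem simply says it is analogous to Theorem~\ref{rectfull1} and records the transcendental equation (yours is the paper's equation divided by $2$, hence equivalent) together with the assertion that its root in $(0,2\pi/h)$ exceeds $\pi/h$. You have merely filled in the details the paper omits — the explicit eigenfunction $v_0$, the face integrals, and the sign check at $\omega^\ast=\pi/\max\{h_i\}$ — all of which are sound.
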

\begin{proof}\,
The proof is similar to the proof of Theorem \ref{rectfull1}.
Instead of (\ref{cot}) we obtain the equation
\begin{equation*}
\frac {h_1h_2}2\cot\Big(\frac {\omega h_3}2\Big)+
\frac {h_1h_3}2\cot\Big(\frac {\omega h_2}2\Big)+
\frac {h_2h_3}2\cot\Big(\frac {\omega h_1}2\Big)+
\frac 2{\omega}\,(h_1+h_2+h_3)=0.
\end{equation*}
The unique solution of this equation in
$(0,\frac{2\pi}h)$ (where $h=\max\{h_1;h_2;h_3\}$) is greater than
$\frac{\pi}h$, and the statement follows.
\end{proof}

\begin{Theorem}
Let $\Omega$ and $\Gamma=\partial\Omega$ be as in Theorem \ref{4.1}. To be definite, assume
that $\max\{h_1;h_2;h_3\}=h_3$. Then the exact constant in (\ref{eq1Gamma}) is equal to
$\big(\frac {2z_1}{h_1} \tanh(z_1)\big)^{-\frac 12}$, where $(z_1,z_2)$ is a unique solution
of the system
\begin{multline}
\label{4.2}
\frac {z_1}{h_1}\tanh(z_1)=\frac {z_2}{h_2}\tanh(z_2)=\\
=\frac {z_1}{h_1}\sqrt{1+\frac {\tanh^2(z_1)}{\tanh^2(z_2)}}\cdot
\cot\bigg(\frac{z_1h_3}{h_1}\sqrt{1+\frac {\tanh^2(z_1)}{\tanh^2(z_2)}}\bigg),
\end{multline}
such that  $0<\frac{z_1h_3}{h_1}\sqrt{1+\frac {\tanh^2(z_1)}{\tanh^2(z_2)}}<\frac {\pi}2$.
\end{Theorem}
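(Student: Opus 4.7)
My strategy is to adapt the parity-decomposition scheme of Theorem \ref{rectfull2} to the three reflection symmetries of the box. Eigenfunctions of (\ref{1.6}) may be chosen with definite parity in each of $x_1,x_2,x_3$, producing eight parity sectors; in each the spectral problem reduces to a mixed one on the octant $\Omega^{+++}:=(0,\tfrac{h_1}{2})\times(0,\tfrac{h_2}{2})\times(0,\tfrac{h_3}{2})$ with Dirichlet conditions on the symmetry planes of odd parity, Neumann conditions on those of even parity, and the Steklov condition on the three outer faces $\Gamma^+:=\Gamma\cap\partial\Omega^{+++}$. The totally even sector is discarded by the same Rayleigh-quotient/scaling device as in Theorem \ref{rectfull2}: comparing $\|\nabla v\|_{2,\Omega^{+++}}^2/\|v\|_{2,\Gamma^+}^2$ with $\|\nabla v\|_{2,\Omega^{+++}}^2/\|v\|_{2,\partial\Omega^{+++}}^2$ on $H^1(\Omega^{+++})$, and noting that the latter's critical values equal $2\lambda_k$ by a $2\!:\!1$ rescaling (since $\Omega^{+++}$ has the same aspect ratio as $\Omega$), one obtains that the minimal eigenvalue in this sector is at least $2\lambda_0>\lambda_0$.

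For the three ``odd in exactly one variable'' sectors the reflection-plus-maximum-principle argument of Theorem \ref{rectfull2} (with the 3D analogue $v(x)=|u(x_1,x_2,|x_3|)|\cdot{\rm sign}(x_3)$ for the sector odd in $x_3$) shows that the lowest eigenfunction is of constant sign on the corresponding half-box. In the sector odd in $x_3$ and even in $x_1,x_2$, which I expect to realise the minimum because $h_3=\max\{h_i\}$, the harmonic ansatz
$$v(x)=\cosh(\alpha x_1)\cosh(\beta x_2)\sin(\gamma x_3),\qquad \gamma^2=\alpha^2+\beta^2,$$
is positive on $\Omega^{+++}$ provided $\gamma h_3/2<\pi/2$, and automatically satisfies the Neumann conditions on $\{x_1=0\},\{x_2=0\}$ and the Dirichlet condition on $\{x_3=0\}$. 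Imposing $\partial_{\bf n}v=\lambda v$ on each of the three outer faces gives
$$\lambda=\alpha\tanh\bigl(\tfrac{\alpha h_1}{2}\bigr)=\beta\tanh\bigl(\tfrac{\beta h_2}{2}\bigr)=\gamma\cot\bigl(\tfrac{\gamma h_3}{2}\bigr),$$
which under the substitutions $z_1=\alpha h_1/2,\;z_2=\beta h_2/2$ becomes precisely (\ref{4.2}) together with $\lambda=\frac{2z_1}{h_1}\tanh(z_1)$. Any competing ansatz (replacing a $\cosh$ by $\cos$) either forces a sign change of $v$ inside $\Omega^{+++}$ or yields $\lambda\le 0$, so it cannot furnish the first eigenfunction; the other two singly odd sectors produce analogous systems with $\cot$ shifted to the $x_1$ or $x_2$ slot.

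The main obstacle is the final comparison, i.e.\ that the value above is smallest across all seven non-totally-even sectors. The doubly odd sectors (ansatz $\sin\cdot\sin\cdot\cosh$ with dispersion $\sqrt{\mu^2+\nu^2}\tanh(\cdots)$ in place of $\cot(\cdots)$) and the triply odd sector (ansatz $\sin\cdot\sin\cdot\sinh$ with $\coth$ in place of $\tanh$) can be dispatched by the observation that adding a Dirichlet symmetry plane enlarges the Rayleigh quotient on $\Omega^{+++}$, so these sectors yield strictly larger infima than the singly odd ones. The more delicate task is to rank the three singly odd sectors against each other. Here I would follow the spirit of the $\alpha f(\alpha)$ monotonicity computation at the end of the proof of Theorem \ref{rectfull2}: treating the system analogous to (\ref{4.2}) as an implicit definition of $\lambda=\lambda(h_1,h_2,h_3)$, differentiating, and showing that $\lambda$ decreases as the length of the unique ``$\cot$-direction'' grows while increasing as the lengths of the two ``$\tanh$-directions'' grow, so that the minimum is attained precisely when the $\cot$ direction coincides with the longest side, i.e.\ $x_3$. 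This three-dimensional monotonicity is where the bulk of the work will reside.
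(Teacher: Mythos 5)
Your proposal follows essentially the same route as the paper: rule out the totally even sector by the Rayleigh-quotient/scaling comparison on the sub-box, reduce the remaining parity sectors to a half-box problem whose ground state is of constant sign, exhibit the explicit $\cosh\cdot\cosh\cdot\sin$ eigenfunction that yields the system (\ref{4.2}) with $\Lambda=\frac{2z_1}{h_1}\tanh(z_1)$, and finally compare the three singly odd candidates. The paper is just as brief on that last comparison (it only states that ``some additional calculations'' show $U_1$ is optimal when $h_3$ is the longest edge), so your plan to settle it by an implicit-differentiation monotonicity argument in the spirit of the $\alpha f(\alpha)$ computation is consistent with, and no less complete than, the published proof.
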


\begin{proof}\,
By the same arguments as in the proof of Theorem \ref{rectfull2},
 we conclude that the eigenfunction of (\ref{1.6}) corresponding to the least
eigenvalue must be odd with respect to one of the coordinate axes
(e.g., with respect to $x_3$). This gives the following analog
of (\ref{eq1Gammaodd}):
\begin{equation}\label{4.3}
\aligned
\Delta u=&0 &&\mbox{in}\quad  \widehat\Omega^+:=(-\frac{h_1}2,\frac{h_1}2)
\times (-\frac{h_2}2,\frac{h_2}2)\times (0,\frac{h_3}2);\\
 u=&0&& \quad\mbox{on}\quad  \{x_3=0\};\\
\partial_{\bf n}u=&\Lambda u &&\mbox{on}\quad \partial\widehat\Omega^+\setminus\{x_3=0\}.
\endaligned
\end{equation}

Repeating  the proof of Theorem \ref{rectfull2}, we find that the least
eigenvalue is generated by the eigenfunction
 of (\ref{4.3}), which is positive in
$\widehat\Omega^+$.

It is easy to see that the equation
\begin{subequations}
\label{4.5}
\begin{equation}
\label{4.5a}
\mu\tanh\Big(\frac {\mu h_1}2\Big)=\nu\tanh\Big(\frac {\nu h_2}2\Big)
\end{equation}
defines an increasing function $\nu=\nu(\mu)$ on ${\mathbb R}_+$, and the equation
\begin{equation}
\label{4.5b}
\mu\tanh\Big(\frac {\mu h_1}2\Big)=\sqrt{\mu^2+\nu^2(\mu)}\cdot\cot\left(\sqrt{\mu^2+\nu^2(\mu)}\
\frac {h_3}2\right)
\end{equation}
\end{subequations}
has a unique solution $\mu_0$ such that $\sqrt{\mu_0^2+\nu^2(\mu_0)}\ \frac {h_3}2<\frac {\pi}2$.
Direct calculation shows that the function
$$
U_1(x)=\cosh(\mu_0 x_1)\cosh(\nu(\mu_0) x_2)
\sin\left(\sqrt{\mu_0^2+\nu^2(\mu_0)}\, x_3\right)
$$
is positive in $\widehat\Omega^+$ and solves the problem (\ref{4.3}) with
$\Lambda=\mu_0\tanh\big(\frac {\mu_0 h_1}2\big)$
(note that (\ref{4.5}) reflects the boundary conditions on the sides of parallelepiped).

Substituting $z_1=\frac {\mu_0h_1}2$, $z_2=\frac {\nu(\mu_0)h_2}2$, we conclude
that the least eigenvalue of (\ref{4.3}) equals $\frac {2z_1}{h_1} \tanh(z_1)$, where
$(z_1,z_2)$ is the solution of (\ref{4.2}).

The eigenfunctions odd with respect to other coordinates can be constructed
quite analogously. However, some additional calculations
show that  $U_1$ is the best eigenfunction provided that $h_3$ is the longest
edge of $\Omega$. Thus, the statement follows.\end{proof}

%\begin{Remark}
%For the sake of completeness, we note that for parallelepiped  with the sides $h_1$, $h_2$, and $h_3$ the exact
%constant in (\ref{1.1}) is
%$C_P=\frac{\max\{h_1,h_2,h_3\}}{\pi}$.
%\end{Remark}

\section{Exact constants for isosceles right triangles}

In this section, $\Omega$ is an isosceles right triangle.
We find sharp constants in (\ref{eq1Omega}) and (\ref{eq1Gamma})
for the following three cases: $\Gamma$ is a leg, $\Gamma$ is the union of two legs,
and $\Gamma$ is the hypotenuse.
Finding exact constants in (\ref{eq1Omega}) and (\ref{eq1Gamma}) for the case $\Gamma=\partial\Omega$ remains an open problem.

\begin{figure}[h!]
\label{isosceles}
\centerline{\includegraphics[width=3.8in,height=1.6in]{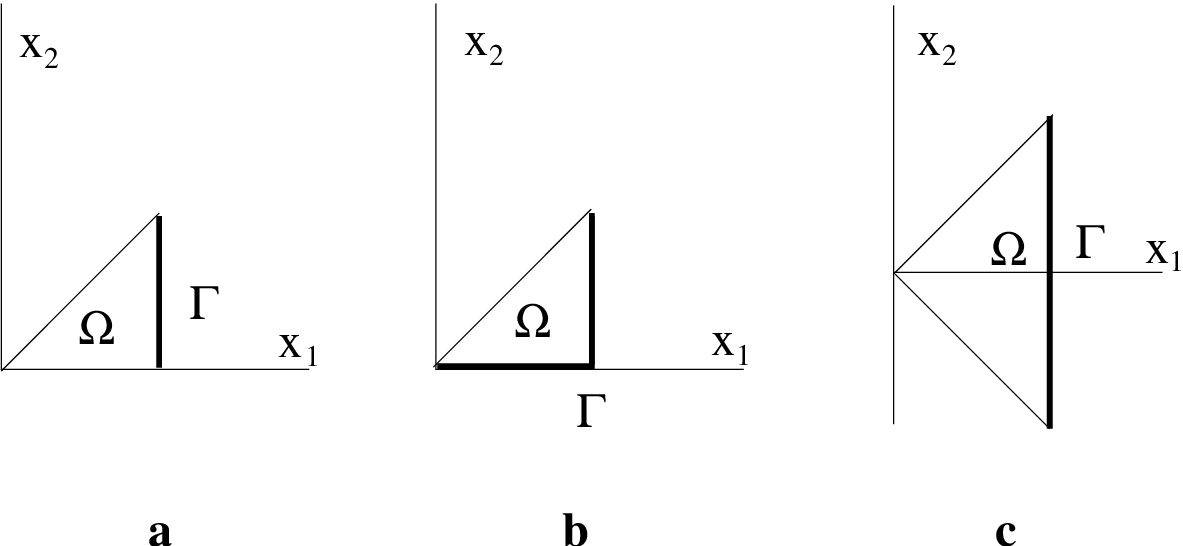}}
\caption{Right triangles}
\end{figure}

\subsection{Case 1: $\Gamma$ is a leg}

It is convenient to select the coordinate system such that
$\Omega=\{0<x_2<x_1<h\}$ and $\Gamma=\{x_1=h,x_2\in[0,h]\}$ (see Fig.~2a).

\begin{Theorem}\label{threeleg1}
The exact constant in (\ref{eq1Omega}) is equal to
$\widetilde z_0^{-1}h$, where $\widetilde z_0\approx 2.02876$ is a unique root of the equation
\begin{equation}\label{cot1}
z\cot(z)+1=0
\end{equation}
in the interval $(0,\pi)$.
\end{Theorem}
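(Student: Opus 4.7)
The plan is to follow the two-step template used in Theorem \ref{rectfull1}: first exhibit an eigenfunction of the boundary value problem (\ref{1.5}) whose eigenvalue is $\lambda_* = (\widetilde z_0/h)^2$, and then show by a variational comparison with the ordinary Neumann eigenvalues of $\Omega$ that $\lambda_*$ coincides with the smallest positive eigenvalue $\lambda_0$. Once this is done, $C_1(\Omega,\Gamma)=\lambda_0^{-1/2}=\widetilde z_0^{-1}h$ follows immediately.

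To guess the candidate I would exploit the reflection across the hypotenuse $\{x_1=x_2\}$, which unfolds $\Omega$ into the square $Q=(0,h)\times(0,h)$; the Neumann condition on the hypotenuse is then automatic for any function symmetric under the swap $(x_1,x_2)\leftrightarrow(x_2,x_1)$. This motivates the ansatz
\[
u(x_1,x_2)=\cos(\omega x_1)+\cos(\omega x_2),
\]
which satisfies $-\Delta u=\omega^2 u$, fulfils $\partial_{\bf n}u=0$ both on the remaining leg $\{x_2=0\}$ and on the hypotenuse, and produces a constant normal derivative $\mu=-\omega\sin(\omega h)$ on $\Gamma$. Imposing the zero-mean condition $\int_\Gamma u\,ds=0$ and substituting $z=\omega h$ reduces, after elementary integration, to $z\cot z+1=0$, which is precisely (\ref{cot1}). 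A direct check also confirms that the compatibility relation $\mu=-\tfrac{\lambda}{|\Gamma|}\int_\Omega u\,dx$ is automatically satisfied.

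For the minimality step I would repeat the interlacing argument used in the proof of Theorem \ref{rectfull1}: since $\widetilde H^1(\Omega,\Gamma)$ has codimension one in $H^1(\Omega)$, the variational principle yields $\widetilde\lambda_k\le\lambda_k\le\widetilde\lambda_{k+1}$, where $\widetilde\lambda_k$ are the ordinary Neumann eigenvalues on $\Omega$. It therefore suffices to verify that $\lambda_*\in(0,\widetilde\lambda_1)$. The value $\widetilde\lambda_1=(\pi/h)^2$ is obtained from the same reflection: Neumann eigenfunctions on $\Omega$ are exactly the diagonal-symmetric Neumann eigenfunctions on $Q$, whose smallest positive one is $\cos(\pi x_1/h)+\cos(\pi x_2/h)$. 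The inequality $\widetilde z_0<\pi$ follows from the observation that $z\cot z+1$ equals $1$ at $z=\pi/2$, tends to $-\infty$ as $z\to\pi^-$, and has derivative $(\tfrac12\sin(2z)-z)/\sin^2 z<0$ on $(0,\pi)$; hence (\ref{cot1}) has a unique root $\widetilde z_0\in(\pi/2,\pi)$, so $\lambda_*<(\pi/h)^2=\widetilde\lambda_1$ and the interlacing forces $\lambda_*=\lambda_0$.

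The only genuine obstacle is the explicit evaluation $\widetilde\lambda_1=(\pi/h)^2$: unlike the case of a rectangle, the Neumann spectrum of a right isosceles triangle is not self-evident from separation of variables. This is resolved by the reflection trick, which reduces the question to the well-known Neumann spectrum of the square restricted to its diagonal-symmetric subspace. Everything else is routine verification of the ansatz and a monotonicity check on (\ref{cot1}).
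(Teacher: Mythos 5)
Your proposal is correct and follows essentially the same route as the paper: the interlacing of the constrained eigenvalues with the Neumann eigenvalues of the triangle (computed via even reflection across the hypotenuse to the square), combined with the explicit eigenfunction $\cos(\widetilde z_0 x_1/h)+\cos(\widetilde z_0 x_2/h)$ whose zero-mean condition on $\Gamma$ is exactly $z\cot z+1=0$. Your added verification that $\widetilde z_0\in(\pi/2,\pi)$ via the monotonicity of $z\cot z$ just makes explicit what the paper leaves as a ``direct calculation.''
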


\begin{proof}\,
We use the same arguments as in the proof of Theorem \ref{rectfull1} and
note that the eigenvalues $\lambda_k^{\vartriangle}$
of the problem (\ref{1.5}) (enumerated in the
increasing order and repeated according to their multiplicity)
are critical values of the Rayleigh quotient
$Q[v]\equiv\frac {\|\nabla v\|_{2,\Omega}^2}{\|v\|_{2,\Omega}^2}$
over the space $\wt H^1(\Omega,\Gamma)$.

Let us consider $Q[v]$ on the whole space $H^1(\Omega)$.
In accordance with the variational
principle, the corresponding critical values
$\widetilde\lambda_k^{\vartriangle}$ (which are also enumerated in the increasing order
and repeated according to their multiplicity) satisfy the relation
$\widetilde\lambda_k^{\vartriangle}\le \lambda_k^{\vartriangle}\le
\widetilde\lambda_{k+1}^{\vartriangle}$.
Therefore, if  the interval
$(\widetilde\lambda_0^{\vartriangle},\widetilde\lambda_1^{\vartriangle})$
contains an eigenvalue of the problem (\ref{1.5}),
then it must be $\lambda_0^{\vartriangle}$.

Note that $\widetilde\lambda_k^{\vartriangle}$ are eigenvalues
of the conventional Neumann problem
\begin{equation}\label{Neumann}
-\Delta u=\lambda u\quad  \mbox{in}\quad  \Omega;\qquad \partial_{\bf n}u=0\quad
\mbox{on}\quad  \partial\Omega.
\end{equation}
By even reflection with respect to the line $\{x_1=x_2\}$,
we conclude that any eigenfunction of (\ref{Neumann}) is an eigenfunction
of the Neumann problem in the square $(0,h)\times(0,h)$. In particular,
$\widetilde\lambda_0^{\vartriangle}=0$ corresponds to
the eigenfunction $\widetilde u_0\equiv1$, and
$\widetilde\lambda_1^{\vartriangle}=\left(\frac{\pi}h\right)^2$ corresponds to the
eigenfunction $\widetilde u_1(x)=\cos(\frac {\pi x_1}h)+\cos(\frac {\pi x_2}h)$.

Direct calculation shows that the function
$$
\widetilde v_0(x)=\cos\Big(\frac {\widetilde z_0 x_1}h\Big)
+\cos\Big(\frac {\widetilde z_0 x_2}h\Big)
$$
solves the problem (\ref{1.5})
 with $\lambda=(\frac {\widetilde z_0}h)^2$
  (the equation (\ref{cot1}) is just the condition
$\int_{\Gamma}\widetilde v_0\,ds=0$). Thus, we conclude that
$\lambda_0^{\vartriangle}=(\frac {\widetilde z_0}h)^2$, and the statement
follows.\end{proof}

\begin{Theorem}\label{threeleg2}
 The sharp
constant in (\ref{eq1Gamma}) is equal to
$\left(\frac {\widehat z_0}h\tanh(\widehat z_0)\right)^{-\frac 12}$
where $\widehat z_0\approx 2.3650$ is a unique root of the equation
\begin{equation}
\label{tanh2}
\tan(z)+\tanh(z)=0
\end{equation}
in the interval $(0,\pi)$.
\end{Theorem}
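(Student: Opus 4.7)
The sharp constant in (\ref{eq1Gamma}) equals $\Lambda_0^{-1/2}$, where $\Lambda_0>0$ is the smallest positive eigenvalue of problem (\ref{1.6}) on $\Omega$. By the variational principle,
\[
\Lambda_0=\inf_{v\in \wt H^1(\Omega,\Gamma)} \frac{\|\nabla v\|_{2,\Omega}^2}{\|v\|_{2,\Gamma}^2}.
\]
My plan is to follow the scheme of Theorem \ref{rectfull2}. Since the Neumann condition in (\ref{1.6}) holds on the hypotenuse $\{x_1=x_2\}$, I would extend every eigenfunction by even reflection across the hypotenuse to a function on the square $Q=(0,h)\times(0,h)$. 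The extended problem reads $\Delta u=0$ in $Q$, Steklov ($\partial_{\bf n}u=\lambda u$) on the top and right sides $\{x_1=h\}\cup\{x_2=h\}$, Neumann on the bottom and left sides $\{x_1=0\}\cup\{x_2=0\}$, together with the symmetry $u(x_1,x_2)=u(x_2,x_1)$. Triangle eigenvalues are exactly the eigenvalues of this reflected problem that admit diagonally symmetric eigenfunctions.

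As the candidate I take
\[
v_0(x)=\cosh(\omega x_1)\cos(\omega x_2)+\cos(\omega x_1)\cosh(\omega x_2),
\]
which is harmonic and diagonally symmetric. The Neumann conditions on $\{x_1=0\}$ and $\{x_2=0\}$ hold automatically, because $\cosh'(0)=\cos'(0)=0$. Imposing the Steklov condition on $\{x_1=h\}$ and using the linear independence of $\cos(\omega x_2)$ and $\cosh(\omega x_2)$ as functions of $x_2$ yields the two identities $\lambda=\omega\tanh(\omega h)$ and $\lambda=-\omega\tan(\omega h)$, which combine into equation (\ref{tanh2}) with $z=\omega h$; the Steklov condition on $\{x_2=h\}$ then follows automatically by symmetry. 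The least positive root of (\ref{tanh2}) lies in $(\pi/2,\pi)$ and equals $\widehat z_0$, so the corresponding eigenvalue is $\lambda=\frac{\widehat z_0}{h}\tanh(\widehat z_0)$.

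The hard part is to show that no eigenvalue of (\ref{1.6}) lies in the open interval $\bigl(0,\frac{\widehat z_0}{h}\tanh(\widehat z_0)\bigr)$. I propose a further reduction: extend the $Q$-problem by even reflection across $\{x_1=0\}$ and then across $\{x_2=0\}$, both of which carry Neumann conditions. This produces the full Steklov problem on the enlarged square $S=(-h,h)^2$; the original triangle eigenvalues correspond bijectively to Steklov eigenvalues of $S$ that admit eigenfunctions which are simultaneously even in $x_1$, even in $x_2$, and symmetric under $(x_1,x_2)\mapsto(x_2,x_1)$. Separable harmonic functions on $S$ compatible with the first two parities are of the form $\cos(\omega x_1)\cosh(\omega x_2)$ or $\cosh(\omega x_1)\cos(\omega x_2)$; the Steklov condition forces $\omega h$ to satisfy (\ref{tanh2}), and the diagonal symmetry selects the sum $v_0$. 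A linear-independence argument analogous to the one used in the proof of Theorem \ref{rectfull2}, combined with the strict monotonicity of the map $\omega\mapsto\omega\tanh(\omega h)$, rules out nontrivial multi-frequency superpositions. Completeness of the family associated with the roots $\widehat z_0<\widehat z_1<\cdots$ within the triply symmetric subspace (which follows from standard expansion theorems for Steklov eigenfunctions on a rectangle) identifies $\widehat z_0$ as the smallest admissible frequency, proving $\Lambda_0=\frac{\widehat z_0}{h}\tanh(\widehat z_0)$ and hence the claimed value of the sharp constant. Verifying this completeness rigorously is the principal technical obstacle I would need to address carefully.
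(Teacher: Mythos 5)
Your reflection scheme is sound as far as it goes: even reflection across the hypotenuse and then across the two legs carrying homogeneous Neumann conditions does convert (\ref{1.6}) on the triangle into the Steklov problem on $(-h,h)^2$ restricted to the subspace of functions even in $x_1$, even in $x_2$, and symmetric under $(x_1,x_2)\mapsto(x_2,x_1)$; your candidate $v_0$, the derivation of (\ref{tanh2}), and the value $\lambda=\frac{\widehat z_0}{h}\tanh(\widehat z_0)$ are all correct. The gap is exactly where you locate it, and it is not a technicality that ``follows from standard expansion theorems'': the separable harmonic functions satisfying the Steklov condition on a square are not known by any classical result to form a complete system, even within a fixed symmetry class. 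Because of the corners, the Dirichlet-to-Neumann operator of a square is not diagonalized by separation of variables, so nothing rules out, a priori, a non-separable eigenfunction in the triply symmetric subspace with eigenvalue below $\frac{\widehat z_0}{h}\tanh(\widehat z_0)$; proving completeness of the separable family for rectangles is a substantial result in its own right, not a lemma you can cite. Nor does the ``linear-independence argument analogous to Theorem \ref{rectfull2}'' help: in that proof the authors never claim completeness of separable Steklov eigenfunctions --- they invoke completeness only for the Neumann Laplacian (where it is classical) and otherwise argue via symmetry, comparison of Rayleigh quotients, and positivity of the first eigenfunction in a half-domain.

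The paper closes this gap by an entirely different device, working directly on the triangle. It first shows that the trace on $\Gamma$ of the minimizer $v$ of $\mathcal Q[v]=\|\nabla v\|_{2,\Omega}^2/\|v\|_{2,\Gamma}^2$ over $\wt H^1(\Omega,\Gamma)$ changes sign exactly once: if it changed sign more than once, one could choose $\alpha\in(0,1)$ so that $\int_0^{\alpha h}v(h,\cdot)=\int_{\alpha h}^{h}v(h,\cdot)=0$, split $\Omega$ by the ray $\{x_2=\alpha x_1\}$, and apply the affine compression $V(x_1,x_2)=v(x_1,\alpha x_2)$ (or its analogue for the other piece) to produce an admissible competitor with strictly smaller Rayleigh quotient --- a contradiction. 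Since $\pi/2<\widehat z_0<\pi$ makes $\widetilde v_1|_{\Gamma}$ monotone, hence with a single sign change, and since an eigenfunction with a different eigenvalue would have to be $L^2(\Gamma)$-orthogonal to both $1$ and $\widetilde v_1$ --- impossible for a trace with one sign change, because some combination $a+b\widetilde v_1$ changes sign at the same point --- the explicit eigenfunction must realize the minimum. To complete your proof you would need either to import this sign-change argument or to actually prove the completeness statement; as written, the step that excludes smaller eigenvalues is missing.
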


\begin{proof}\,
Let $v$ be the minimizer of the Rayleigh quotient
${\mathcal Q}[v]\equiv\frac {\|\nabla
v\|_{2,\Omega}^2}{\|v\|_{2,\Gamma}^2}$ over the space
$\wt H^1(\Omega,\Gamma)$.
Since the trace $v|_{\Gamma}$ is orthogonal to $1$ in $L^2(\Gamma)$, 
%it must change the sign.
its primitive $F(y)=\int_0^{y} v(h,t)\,dt$ vanishes at $0$ and at $h$.

%We claim that $v|_{\Gamma}$ has exactly one change of sign. 
We claim that $F$ has constant sign on $(0,h)$. Indeed,
otherwise there exists $\alpha\in(0,1)$ such that
\begin{equation}
\int\limits_0^{\alpha h} v(h,x_2)\,dx_2=\int\limits_{\alpha h}^h
v(h,x_2)\,dx_2=0.
\label{split}
\end{equation}
We split $\Omega$ into two subdomains
$$
\Omega_1=\Omega\cap\{x_2<\alpha x_1\}\quad{\rm and}\quad
\Omega_2=\Omega\cap\{x_2>\alpha x_1\}.
$$
Let us
compare the respective Rayleigh quotients
$$
{\mathcal Q}_j[v]=\frac {\|\nabla v\|_{2,\Omega_j}^2}{\|v\|_{2,\Gamma_j}^2},
\qquad j=1,2,
$$
where  $\Gamma_j:=\Gamma\cap\partial\Omega_j$.
Assume that ${\mathcal Q}_1[v]\le {\mathcal Q}_2[v]$.
We define the function
 $V(x_1,x_2)=v(x_1,\alpha x_2)$. Then $\int_{\Gamma}V\,dx_2=0$ by
(\ref{split}),
and
$$
{\mathcal Q}[V]=
\frac {\int\limits_{\Omega}\big(V_{x_1}^2+V_{x_2}^2\big)\,dx_1dx_2}
{\int\limits_{\Gamma}V^2\,dx_2}
=\frac
{\int\limits_{\Omega_1}\big(v_{x_1}^2+\alpha^2v_{x_2}^2\big)\,dx_1dx_2}{\int\limits_{\Gamma_1}v^2\,dx_2}\le
{\mathcal Q}_1[v]\le
{\mathcal Q}[v].
$$
Moreover, the first inequality in the above relations is strict. Indeed, the converse
implies $v_{x_2}\equiv0$, what is impossible since $v$ is harmonic.

% COMMENT: I suggest to explain about the last statement a bit more!
% In brief reading the question "why impossible" arises.

Analogously, if ${\mathcal Q}_2[v]\le {\mathcal Q}_1[v]$, then we consider
the function
 $V(x_1,x_2)=v(x_1,(1-\alpha) x_2+\alpha h)$ and again find that
${\mathcal Q}[V]<{\mathcal Q}[v]$. Thus, the claim follows.\medskip

Further, direct calculation shows that the function
$$
\widetilde v_1(x)=\cos\Big(\frac {\widehat z_0 x_1}h\Big)\cosh\Big(\frac {\widehat z_0 x_2}h\Big)+
\cosh\Big(\frac {\widehat z_0 x_1}h\Big)\cos\Big(\frac {\widehat z_0 x_2}h\Big)
$$
solves the problem (\ref{1.6}) with $\lambda=\frac {\widehat z_0}h\tanh(\widehat z_0)$ 
(the equation (\ref{tanh2}) is just the condition $\int_{\Gamma}\widetilde v_1\,ds=0$). 
Since $\frac {\pi}2<\widehat z_0<\pi$, $\widetilde v_1|_{\Gamma}$ is monotone decreasing 
in $x_2$ and, in particular, %$\widetilde v_1|_{\Gamma}$ has exactly one change of sign.
the primitive of $\widetilde v_1|_{\Gamma}$ has constant sign.

Finally, any other eigenfunction $v$ of the problem (\ref{1.6}) can be chosen orthogonal to $1$ and to 
$\widetilde v_1$ in $L^2(\Gamma)$. 
%But if $v|_{\Gamma}$ has exactly one change of sign, it is impossible since we can take a linear 
%combination of $1$ and $\widetilde v_1$ such that it changes sign at the same point. 
But if $F$ (the primitive of $v|_{\Gamma}$) has a
constant sign (e.g., positive), then we arrive at a contradiction because
$$
\int\limits_0^{h} v(h,t)\widetilde v_1(h,t)\,dt=
-\int\limits_0^{h} F(h,t)\widetilde v_1'(h,t)\,dt>0.
$$
Thus, $\widetilde v_1$ should minimize ${\mathcal Q}[v]$ over
$\wt H^1(\Omega,\Gamma)$. The proof is complete.
\end{proof}

\subsection{Case 2:  $\Gamma$ is the union of two legs}
Let (see Fig.~2b)
$$\Omega=\{0<x_2<x_1<h\}\ \ {\rm and}\ \ 
\Gamma=\{x_1=h,x_2\in[0,h]\}\cup\{x_2=0,x_1\in[0,h]\}.
$$
\begin{Theorem}
\label{three2legs1}
The sharp constant in (\ref{eq1Omega}) is equal to $\frac h{\pi}$.
\end{Theorem}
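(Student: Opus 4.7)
The plan is to reduce Theorem \ref{three2legs1} to the square case already handled by Theorem \ref{rectfull1}, via symmetric reflection across the hypotenuse. Given $w\in\wt H^1(\Omega,\Gamma)$, I would define the extension
\[
\wt w(x_1,x_2):=
\begin{cases} w(x_1,x_2), & x_2\le x_1,\\
              w(x_2,x_1), & x_2>x_1,\end{cases}
\]
on the full square $\Omega':=(0,h)\times(0,h)$. The two formulas agree on the diagonal $\{x_1=x_2\}$, so the traces from the two halves match and $\wt w\in H^1(\Omega')$; the change of variables $(x_1,x_2)\leftrightarrow(x_2,x_1)$ between the two halves gives $\|\wt w\|_{2,\Omega'}^2=2\|w\|_{2,\Omega}^2$ and $\|\nabla\wt w\|_{2,\Omega'}^2=2\|\nabla w\|_{2,\Omega}^2$.

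Next I would verify $\wt w\in\wt H^1(\Omega',\partial\Omega')$ by a direct boundary computation. Using the definition of $\wt w$ on each of the four sides: the integrals $\int_{\{x_1=0\}}\wt w\,ds$ and $\int_{\{x_2=0\}}\wt w\,ds$ both equal $\int_0^h w(t,0)\,dt$, while $\int_{\{x_1=h\}}\wt w\,ds$ and $\int_{\{x_2=h\}}\wt w\,ds$ both equal $\int_0^h w(h,t)\,dt$; summing yields $\int_{\partial\Omega'}\wt w\,ds=2\int_\Gamma w\,ds=0$. Applying Theorem \ref{rectfull1} with $h_1=h_2=h$ to $\wt w$ gives $\|\wt w\|_{2,\Omega'}\le(h/\pi)\|\nabla\wt w\|_{2,\Omega'}$, and the common factor $\sqrt 2$ cancels to produce $\|w\|_{2,\Omega}\le(h/\pi)\|\nabla w\|_{2,\Omega}$, so $C_1(\Omega,\Gamma)\le h/\pi$.

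For sharpness I would exhibit the explicit extremizer $\wt v(x)=\cos(\pi x_1/h)+\cos(\pi x_2/h)$: it solves $-\Delta\wt v=(\pi/h)^2\wt v$, satisfies $\partial_{\bf n}\wt v=0$ on each leg by direct differentiation, and also vanishes in normal derivative on the hypotenuse $\{x_1=x_2\}$ because the symmetry $\wt v(x_1,x_2)=\wt v(x_2,x_1)$ forces $\partial_{x_1}\wt v=\partial_{x_2}\wt v$ there. A short computation gives $\int_\Gamma\wt v\,ds = -h+h=0$, so $\wt v\in\wt H^1(\Omega,\Gamma)$, and elementary integration yields $\|\wt v\|_{2,\Omega}^2=h^2/2$ and $\|\nabla\wt v\|_{2,\Omega}^2=\pi^2/2$, so its Rayleigh quotient equals $(\pi/h)^2$; combined with the upper bound above, this gives $C_1(\Omega,\Gamma)=h/\pi$. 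The only step requiring real attention is the boundary bookkeeping in the second paragraph; all spectral heavy lifting is absorbed into Theorem \ref{rectfull1}.
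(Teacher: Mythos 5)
Your proposal is correct and follows essentially the same route as the paper: even reflection across the hypotenuse reduces the problem to the square $(0,h)\times(0,h)$ with $\Gamma=\partial\Omega'$, Theorem \ref{rectfull1} supplies the bound, and the same symmetric function $\cos(\pi x_1/h)+\cos(\pi x_2/h)$ shows sharpness. The only (cosmetic) difference is that you argue at the level of test functions and the Rayleigh quotient, whereas the paper phrases the reflection at the level of eigenfunctions; your boundary bookkeeping and norm computations are all correct.
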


\begin{proof}\,
Making even reflection with respect to the line $\{x_1=x_2\}$,
 we conclude that any eigenfunction of (\ref{1.5})
is an eigenfunction of the same problem in the square domain $\Omega'=(0,h)\times(0,h)$
with $\Gamma=\partial\Omega'$.
The corresponding eigenvalue problem has been studied
 in Theorem \ref{rectfull1}, which states that the least positive eigenvalue
 is equal to $(\frac {\pi}h)^2$.
The  corresponding eigenspace has the dimension $2$ and contains the function
$\cos(\frac {\pi}h x_1)+\cos(\frac {\pi}h x_2)$ which solves the original
 problem in the triangle.
 \end{proof}

\begin{Theorem}
\label{three2legs2}
The sharp constant in (\ref{eq1Gamma}) is equal to
$\left(\frac {2z_0}h \tanh(z_0)\right)^{-\frac 12}$, where $z_0\approx 0.93755$
is a unique root of the equation (\ref{tanh}) in  $(0,\frac {\pi}2)$
for $\alpha=1$.
\end{Theorem}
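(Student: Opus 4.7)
The plan is to reduce this problem to the Steklov problem on the square $\Omega' = (0,h) \times (0,h)$ with $\Gamma' = \partial\Omega'$, already settled in Theorem \ref{rectfull2}, via even reflection across the hypotenuse $\{x_1 = x_2\}$. Given $u \in \wt H^1(\Omega, \Gamma)$, I would define $\bar u$ on $\Omega'$ by $\bar u(x_1, x_2) = u(x_1, x_2)$ for $x_1 > x_2$ and $\bar u(x_1, x_2) = u(x_2, x_1)$ for $x_1 < x_2$. Under this reflection the two legs of $\Gamma$ pair with the remaining two sides of $\Omega'$ to form all of $\partial\Omega'$, and the Neumann condition on the hypotenuse in (\ref{1.6}) is exactly the $H^1$-compatibility condition for $\bar u$ across the diagonal. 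Both $\|\nabla u\|_{2,\Omega}^2$ and $\|u\|_{2,\Gamma}^2$ double under the reflection, so the Rayleigh quotient ${\mathcal Q}$ is preserved.

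Thus the triangle eigenvalue problem is equivalent to the Steklov problem on the square $\Omega'$ restricted to the subspace of functions symmetric under the diagonal swap $\sigma\colon (x_1, x_2) \mapsto (x_2, x_1)$. A constrained minimum is never smaller than the unconstrained one, and Theorem \ref{rectfull2} applied with $h_1 = h_2 = h$ (hence $\alpha_0 = 1$) supplies the lower bound $\frac{2z_0}{h}\tanh(z_0)$ for the least positive eigenvalue, where $z_0$ solves (\ref{tanh}) in $(0, \pi/2)$ with $\alpha = 1$.

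For the matching upper bound I would exhibit a minimizer of the square problem that is $\sigma$-invariant. The proof of Theorem \ref{rectfull2} produced the minimizer $v_1(y_1, y_2) = \sin(\omega_1 y_1)\cosh(\omega_1 y_2)$ (in centered coordinates); by the reflective symmetry of the square, $v_2 := v_1 \circ \sigma$ is another minimizer with the same eigenvalue. Their sum $v_1 + v_2$ is then $\sigma$-invariant, not identically zero (both summands agree in sign on the relevant quadrant), and inherits $\int_{\partial\Omega'}(v_1 + v_2)\,ds = 0$. Its restriction to $\Omega$ lies in $\wt H^1(\Omega, \Gamma)$ and attains the eigenvalue $\frac{2z_0}{h}\tanh(z_0)$, so the lower bound is sharp and $C_2(\Omega, \Gamma) = \bigl(\frac{2z_0}{h}\tanh(z_0)\bigr)^{-1/2}$.

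The main obstacle is the careful bookkeeping of the reflection: verifying that the Neumann boundary condition on the hypotenuse really matches the $H^1$-continuity of $\bar u$ across the diagonal (standard but needing care in the weak formulation), and confirming that the symmetric combination $v_1 + v_2$ does not accidentally vanish. The latter is where the explicit formula for $v_1$ is used, since both the $\sin$ and $\cosh$ factors have definite signs on an appropriate neighborhood of the diagonal inside $\Omega$.
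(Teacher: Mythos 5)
Your proposal is correct and follows essentially the same route as the paper: even reflection across the hypotenuse reduces the problem to the Steklov problem on the square $(0,h)\times(0,h)$ settled in Theorem \ref{rectfull2}, and the diagonal-symmetric eigenfunction $\sin\xi_1\cosh\xi_2+\cosh\xi_1\sin\xi_2$ in the two-dimensional first eigenspace restricts to a minimizer on the triangle. Your write-up merely makes explicit the lower-bound/upper-bound bookkeeping that the paper leaves implicit.
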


\begin{proof}\,
Again, we use even reflection with respect to the line $\{x_1=x_2\}$ and reduce our problem
to the problem in the square domain $(0,h)\times(0,h)$. By Theorem
\ref{rectfull2}, we conclude that the least positive eigenvalue is equal to
 $\frac {2z_0}h \tanh(z_0)$.
The  corresponding eigenspace has the dimension $2$ and contains the function
\begin{equation*}
\sin\xi_1\cosh\xi_2+
\cosh\xi_1
\sin\xi_2,
\end{equation*}
 where
$\xi_1=z_0\Big(\frac {2x_1}h-1\Big)$ and $\xi_2=z_0\Big(\frac {2x_2}h-1\Big)$.
It solves the original problem in the triangle.
\end{proof}

\subsection{Case 3:  $\Gamma$ is the hypotenuse}

In this case, it is convenient to select the coordinate system such that (see Fig.~2c)
$$\Omega=\{0<|x_2|<x_1<h\}\quad{\rm and}\quad \Gamma=\{x_1=h, x_2\in[-h,h]\}.
$$
\begin{Theorem}
\label{threehyp}
The sharp constant in (\ref{eq1Omega}) is equal to
$\widetilde z_0^{-1}h$, where $\widetilde z_0$ is defined in Theorem \ref{threeleg1}.
\end{Theorem}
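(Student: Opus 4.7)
The plan is to exploit the reflection symmetry $x_2\mapsto -x_2$ of both $\Omega$ and $\Gamma$. Every eigenfunction $u$ of (\ref{1.5}) splits into its even and odd parts in $x_2$, each of which is itself an eigenfunction for the same $\lambda$, so I would analyze the two parities separately on the half-triangle $\Omega^+:=\{0<x_2<x_1<h\}$ and then compare the resulting minima. Write $\Gamma':=\{x_1=h,\,0\le x_2\le h\}$ for the upper half of the hypotenuse, which is precisely one leg of $\Omega^+$ and puts us in the setting of Theorem~\ref{threeleg1}.

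For eigenfunctions even in $x_2$, the restriction $u|_{\Omega^+}$ inherits $\partial_{\bf n}u=0$ on $\{x_2=0\}$ (by evenness) and on the upper leg $\{x_2=x_1\}$ (from the original Neumann condition), while $\partial_{\bf n}u=\mu$ on $\Gamma'$. The key bookkeeping step is that, since $|\Gamma|=2|\Gamma'|$ and $\int_\Omega u\,dx=2\int_{\Omega^+}u\,dx$, one obtains $\mu=-\lambda|\Gamma'|^{-1}\int_{\Omega^+}u\,dx$, and the zero-mean condition on $\Gamma$ becomes the zero-mean condition on $\Gamma'$. The even case therefore reduces \emph{exactly} to the problem solved in Theorem~\ref{threeleg1}, which produces the candidate eigenvalue $(\widetilde z_0/h)^2$.

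For eigenfunctions odd in $x_2$, oddness forces $u=0$ on $\{x_2=0\}$ and $\int_\Omega u\,dx=0$, so $\mu=0$ and $\partial_{\bf n}u=0$ on $\Gamma'$ as well; on $\{x_2=x_1\}$ the Neumann condition persists. Reflecting $u|_{\Omega^+}$ evenly across the diagonal $\{x_1=x_2\}$ — admissible precisely because $\partial_{\bf n}u=0$ there — extends it to an eigenfunction of $-\Delta$ on the open square $(0,h)^2$ subject to Dirichlet data on $\{x_1=0\}\cup\{x_2=0\}$ and Neumann data on $\{x_1=h\}\cup\{x_2=h\}$. Separation of variables yields the spectrum $\big(\tfrac{\pi(2k+1)}{2h}\big)^2+\big(\tfrac{\pi(2m+1)}{2h}\big)^2$, so every odd eigenvalue is at least $\pi^2/(2h^2)$.

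It then remains to show $(\widetilde z_0/h)^2<\pi^2/(2h^2)$, i.e.\ $\widetilde z_0<\pi/\sqrt 2$. I would verify this by checking that $f(z)=z\cot z+1$ is strictly decreasing on $(0,\pi)$ — its derivative $(\tfrac12\sin 2z-z)/\sin^2 z$ is negative there — and that $f(\pi/\sqrt 2)<0$, which is a direct sign check since $\pi/\sqrt 2\approx 2.221$ lies in the subinterval where $\cot$ is strongly negative. The main obstacle is the bookkeeping in the even case: the coefficient in front of $\int_{\Omega^+}u\,dx$ inside $\mu$ must come out to be exactly $\lambda/|\Gamma'|$, otherwise the reduction to Theorem~\ref{threeleg1} would not be an identity and the equality of constants would be a coincidence rather than a structural fact. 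Once this factor-of-two accounting is confirmed, the identification of the sharp constant is immediate.
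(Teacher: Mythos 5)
Your proof is correct and follows essentially the same route as the paper: parity decomposition in $x_2$, reduction of the even part to Theorem~\ref{threeleg1} and of the odd part (via even reflection across the diagonal $\{x_1=x_2\}$) to a mixed Dirichlet--Neumann problem on the square with least eigenvalue $\pi^2/(2h^2)$, followed by the comparison $\widetilde z_0<\pi/\sqrt 2$ using the monotonicity of $z\cot z$ on $(0,\pi)$. The factor-of-two bookkeeping for $\mu$ that you flag as the main obstacle does check out exactly, so the even case is indeed identical to Theorem~\ref{threeleg1}.
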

\begin{proof}\,
First, we consider the eigenfunctions of  (\ref{1.5}), which are
odd with respect to $x_2$. Then, $\mu=0$ and we arrive at the
following problem:% in $\widetilde\Omega^+=\{0<x_2<x_1<h\}$:
\begin{equation}
\label{eq1.1odd-a}
\aligned
-\Delta u=&\lambda u &&\mbox{in}\quad
\widetilde\Omega^+:=\{0<x_2<x_1<h\},\\
 u=&0 \quad&&\mbox{on}\quad\{x_2=0\},\\
\partial_{\bf n}u=&0&&\mbox{on}\quad \partial\widetilde\Omega^+\setminus\{x_2=0\}.
\endaligned
\end{equation}
As in Theorems \ref{three2legs1} and \ref{three2legs2}, we use even reflection
with respect to the line $\{x_1=x_2\}$ and reduce (\ref{eq1.1odd-a})
to the problem in $(0,h)\times(0,h)$. Thus, we conclude
that the least eigenvalue of the problem (\ref{eq1.1odd-a}) is equal to
$\frac 12(\frac {\pi}h)^2$ and corresponds to the eigenfunction
$$
\widehat u_0(x)=\sin\left(\frac {\pi x_1}{2h}\right)\sin\left(\frac {\pi x_2}{2h}\right).
$$
Next, we consider the eigenfunctions, which are even
with respect to $x_2$. Then, we arrive at the problem (\ref{1.5})
in $\widetilde\Omega^+$
which has been already solved in Theorem \ref{threeleg1}.

To complete the proof we compare
$\frac 12(\frac {\pi}h)^2$ and $(\frac {\widetilde z_0}h)^2$.
It is easy to check that
$\frac {\pi}{\sqrt{2}}\cdot\cot(\frac {\pi}{\sqrt{2}})<-1$.
 Since $t\mapsto t\cdot\cot(t)$ is a decreasing function on $(0,\pi)$,
this means that $\frac {\pi}{\sqrt{2}}>\widetilde z_0$, and the statement follows.
\end{proof}

\begin{Theorem}
 The exact constant in (\ref{eq1Gamma}) is equal to $h^{\frac 12}$.
\end{Theorem}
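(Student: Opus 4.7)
The plan is to decompose any $v\in\widetilde H^1(\Omega,\Gamma)$ into its $x_2$-odd and $x_2$-even parts $v=v_o+v_e$. Because $\Omega$ is symmetric under $x_2\to-x_2$, cross terms in both $\int_\Omega|\nabla v|^2$ and $\int_\Gamma v^2$ vanish, and the mean-zero constraint $\int_\Gamma v\,ds=0$ transfers to $v_e$ alone (since $v_o$ integrates to zero by parity). Thus the Rayleigh inequality $\int_\Omega|\nabla v|^2\ge h^{-1}\int_\Gamma v^2$ reduces to two separate estimates on the half-triangle $\widetilde\Omega^+:=\{0<x_2<x_1<h\}$ with $\Gamma^+:=\{x_1=h,\,0\le x_2\le h\}$. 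The matching upper bound $C_2(\Omega,\Gamma)\le h^{1/2}$ is witnessed by the eigenfunction $u(x_1,x_2)=x_1x_2$: it is harmonic, its gradient $(x_2,x_1)$ is tangent to both legs $\{x_2=\pm x_1\}$, $\partial_{x_1}u|_{x_1=h}=x_2=u/h$, and $\int_\Gamma u\,ds=0$ by oddness in $x_2$, so $\lambda=1/h$ is an eigenvalue.

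For the odd estimate (with $v_o|_{x_2=0}=0$ and no mean constraint), I would substitute $v_o=x_1x_2\,\phi$ and expand
\begin{equation*}
|\nabla v_o|^2=(x_1^2+x_2^2)\phi^2+x_1x_2^2(\phi^2)_{x_1}+x_1^2x_2(\phi^2)_{x_2}+x_1^2x_2^2|\nabla\phi|^2.
\end{equation*}
Integrating the two middle terms by parts over $\widetilde\Omega^+$: the volume contributions $-\int x_2^2\phi^2\,dx$ and $-\int x_1^2\phi^2\,dx$ cancel the $(x_1^2+x_2^2)\phi^2$ term exactly; on $\{x_2=0\}$ both boundary contributions vanish thanks to the factors $x_2^2$ and $x_2$; on the side $\{x_1=x_2\}$ the two contributions $\pm\int_0^h x_1^3\phi(x_1,x_1)^2\,dx_1$ have opposite signs and cancel; and on $\{x_1=h\}$ only the first term contributes, giving $h\int_0^h x_2^2\phi(h,x_2)^2\,dx_2$. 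Since $\int_{\Gamma^+}v_o^2\,ds=h^2\int_0^h x_2^2\phi(h,x_2)^2\,dx_2$, this identity becomes
\begin{equation*}
\int_{\widetilde\Omega^+}|\nabla v_o|^2\,dx=\tfrac{1}{h}\int_{\Gamma^+}v_o^2\,ds+\int_{\widetilde\Omega^+}x_1^2x_2^2|\nabla\phi|^2\,dx\ge\tfrac{1}{h}\int_{\Gamma^+}v_o^2\,ds,
\end{equation*}
with equality iff $\phi$ is constant, i.e.\ $v_o\propto x_1x_2$.

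For the even estimate, $v_e$ restricted to $\widetilde\Omega^+$ has Neumann-compatible conditions on both $\{x_2=0\}$ and $\{x_1=x_2\}$ together with $\int_0^h v_e(h,x_2)\,dx_2=0$; this is precisely the setup of Theorem \ref{threeleg2} applied to $\widetilde\Omega^+$ with Steklov leg $\{x_1=h\}$. That theorem's sharp bound gives
\begin{equation*}
\int_{\widetilde\Omega^+}|\nabla v_e|^2\,dx\ge\tfrac{\widehat z_0\tanh(\widehat z_0)}{h}\int_{\Gamma^+}v_e^2\,ds,
\end{equation*}
and since $z\mapsto z\tanh z$ is strictly increasing on $(0,\infty)$ and $\widehat z_0>\pi/2$, we have $\widehat z_0\tanh(\widehat z_0)>(\pi/2)\tanh(\pi/2)>1$, so this is strictly stronger than $h^{-1}\int_{\Gamma^+}v_e^2\,ds$. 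Combining the odd and even estimates produces $\int_\Omega|\nabla v|^2\ge h^{-1}\int_\Gamma v^2$, and together with the upper bound yields $C_2(\Omega,\Gamma)=h^{1/2}$. The main technical subtlety lies in justifying the integration by parts in the odd step when $\phi=v_o/(x_1x_2)$ may fail to be bounded near the apex $(0,0)$; I would handle this by first establishing the identity for $v_o$ smooth and vanishing in a neighbourhood of the apex, then passing to the limit by a density/truncation argument.
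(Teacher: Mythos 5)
Your argument is correct and its skeleton coincides with the paper's: both decompose with respect to the symmetry $x_2\mapsto -x_2$, observe that the zero-mean constraint falls entirely on the even part, dispose of the even part by reducing to Theorem \ref{threeleg2} on $\widetilde\Omega^+$ and the inequality $\widehat z_0\tanh(\widehat z_0)>\frac{\pi}{2}\tanh(\frac{\pi}{2})>1$, and identify $x_1x_2$ as the extremal function with eigenvalue $\frac1h$. Where you genuinely depart from the paper is in the odd part. The paper argues spectrally: it invokes the same reflection-plus-maximum-principle reasoning used for (\ref{eq1Gammaodd}) to show that the first eigenfunction of the odd problem must have constant sign, and then uses orthogonality of traces to conclude that the positive solution $x_1x_2$ must be the ground state. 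You instead prove the sharp inequality directly by the ground-state substitution $v_o=x_1x_2\,\phi$ and an integration-by-parts identity whose remainder $\int x_1^2x_2^2|\nabla\phi|^2$ is manifestly nonnegative; your bookkeeping of the boundary terms (cancellation on the diagonal, vanishing on $\{x_2=0\}$, the surviving term on $\{x_1=h\}$ matching $\frac1h\int_{\Gamma^+}v_o^2$) checks out. This buys a self-contained, purely variational proof of that step, with the equality case ($\phi$ constant) falling out for free, at the price of the regularity issue you correctly flag near the apex; the truncation/density argument you sketch is the standard and adequate fix. Two small points: the trial function $x_1x_2$ witnesses the \emph{lower} bound $C_2(\Omega,\Gamma)\ge h^{1/2}$ (sharpness), not the upper bound as you wrote --- the upper bound is exactly what your two Rayleigh estimates provide; and for the even part no boundary conditions on $\{x_2=0\}$ or $\{x_1=x_2\}$ need be invoked, since (\ref{eq1Gamma}) for $\widetilde\Omega^+$ holds for every function in $\wt H^1(\widetilde\Omega^+,\Gamma^+)$.
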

\begin{proof}\,
First, we consider eigenfunctions of the problem (\ref{1.6})
even with respect to $x_2$. Then we arrive at the problem (\ref{1.6})
in $\widetilde\Omega^+$ which is solved in Theorem \ref{threeleg2}.

Further, let us consider the eigenfunctions, which are odd with respect
to $x_1$. We arrive at the following problem in $\widetilde\Omega^+$:
\begin{equation} \label{eq1Gammaodd-a}
\aligned
\Delta u &=0\quad \mbox{in}\quad\widetilde\Omega^+;&
\partial_{\bf n}u &=0 \quad \mbox{on}\quad \{x_1=x_2\};\\
u &=0 \quad  \mbox{on}\quad  \{x_2=0\};&
 \partial_{\bf n}u &=\lambda u\quad\mbox{on}\quad  \{x_1=h\}.
\endaligned
\end{equation}
Direct calculation shows that the function $x_1x_2$
is positive in $\widetilde\Omega^+$ and solves the problem (\ref{eq1Gammaodd-a}) with
$\lambda=\frac 1h$. By the same arguments as we used for  the problem (\ref{eq1Gammaodd}),
 it can be shown that this function  corresponds to the least eigenvalue.

To complete the proof we compare
$\frac {\widehat z}h\tanh(\widehat z_0)$ and $\frac 1h$.
Since $\widehat z_0>\frac {\pi}2$,
we have $\widehat z_0\tanh(\widehat z_0)>\frac {\pi}2\tanh(\frac {\pi}2)>1$,
and the statement follows.
\end{proof}

\section{An application of the estimates}
Exact values of $ C_1(\Omega,\Gamma)$ and $ C_2(\Omega,\Gamma)$
for rectangular domains and  right  triangles presented in  Theorems
\ref{rectside}--\ref{rectfull2} and \ref{threeleg1}--\ref{threehyp}   yield
guaranteed bounds of constants in analogous inegualities for arbitrary nondegenerate
triangles and convex quadrilaterals. The corresponding constants  are
estimated by standard techniques based on affine equivalent transformations (see \cite{MaRe}).
Estimates of the constant $C_2$ can be also obtained by monotonicity
arguments. Indeed, if $\Omega_1$ and $\Omega_2$ are two Lipscitz domains
such that $\Omega_1\subset\Omega_2$ and $\Gamma$ belongs to
 $\partial\Omega_1\cap\partial\Omega_2$, then $C_2(\Omega_1,\Gamma)\geq C_2(\Omega_2,\Gamma)$.

Estimates (\ref{eq1Omega}) and (\ref{eq1Gamma}) with known constants can be used
in various problems arising in  quantitative analysis of  partial differential equations.
For example, let $\Omega$ be a polygonal domain covered by a simplicial mesh ${\mathcal T}_h$,
which cells $T_i$ ($i=1,2,...,N$) have a character size $h$. Assume that a function $w_h\in H^1(\Omega)$ satisfies the condition
$\mean{w_h}_{E_{ij}}=0$ on any edge $E_{ij}=~\partial{T_i}~\cap~\partial{T_j}$. Then, we find that
\be
\|w_h\|_\Omega\,\leq\, C\|\nabla w_h\|_\Omega,\;{\rm where}\; C=\max\limits_{i=1,2,...,N}\,\min\limits_{E_{ij}\subset \partial T_i} \,C_1(T_i,E_{ij}).
\ee
Estimates of this type contain explicitly known constants (which are proportional to $h$)
and allow us to derive
fully computable a posteriori estimates of the accuracy of approximate solutions (in \cite{MaRe}
this question is studied in the context of parabolic type equations).

However, a posteriori analysis of approximation errors is only
 one possible area of application.
Below, we briefly discuss another example, in which the estimates are used to
deduce a computable a priori estimate of the difference between exact solutions
of two boundary value problems.
Estimates of this type show when it is worth using a simplified mathematical model instead
of the original (complicated) one.

Consider the following {\em Problem $\cP$}: find
$u\in H^1(\Omega)$ such that
$$
u-u_0\in V_0:=\{w\in H^1(\Omega): w|_{\Gamma^D}=0\}
$$
and
\begin{equation}
\label{ap5}
\IntO A\nabla u\cdot\nabla w\,dx=\IntO fw\,dx+\int\limits_{\Gamma_N}Fw\,ds,
\qquad\forall w\in V_0,
\end{equation}
where  $\partial\Omega$ consists of two measurable nonintersecting parts
$\Gamma^D$ and $\Gamma^N$ associated with Dirichlet and Neumann boundary conditions, respectively,
$f\in L^2(\Omega)$,  $F~\in~ L^2(~\Gamma^N~)$, and the trace of function  $u_0\in H^1(\Omega)$
defines the Dirichlet
boundary condition.  We assume
that the matrix $A$ is symmetric, bounded, and satisfies the uniform ellipticity condition
$ A\xi\cdot\xi\,\geq\, c|\xi|^2$, where $c$ is a positive constant and the dot stands
for the scalar product of vectors.

Let  $\Omega$ be divided into a finite set $\cO$  of "simple" nonoverlapping
subdomains
$\Omega_i$ (e.g., they can be cells of
a certain polygonal  mesh). Each $\Omega_i$ belongs to one of the following three subsets:
\begin{eqnarray*}
&&\cOD:=\{\Omega_i\subset \Omega\,\mid\, \partial\Omega_i\cap\Gamma^D=:\Gamma^D_i\not=\emptyset\},\\
&&\cON:=\{\Omega_i\subset \Omega\,\mid\, \partial\Omega_i\cap\Gamma^N=:\Gamma^N_i\not=\emptyset\},\quad
 \cOI:=\cO\setminus(\cOD\cup\cON).
\end{eqnarray*}
Here, $\cOI$ contains interior subdomains, $\cOD$ contains subdomains associated with
$\Gamma^D$, and elements of $\cON$ are the  subdomains associated with  $\Gamma^N$.
Then, $$\overline\Omega=\overline\Omega^D\cup\overline\Omega^I\cup\overline\Omega^N,
$$
where $\Omega^D$, $\Omega^N$, and $\Omega^I$ consist of $\Omega_i$ from $\cOD$, $\cON$, and $\cOI$,
respectively.
In the general case,  $\Omega_i$ may have common boundaries with
 $\Gamma_N$ and $\Gamma_D$, but then
we can subdivide it into two and obtain a decomposition of the above type, which subdomains
form three sets: $\cOI$, $\cON$, and $\cOD$ (see Fig.~3).

\begin{figure}[h!]
\label{domainNR}
\centerline{\includegraphics[width=2.7in]{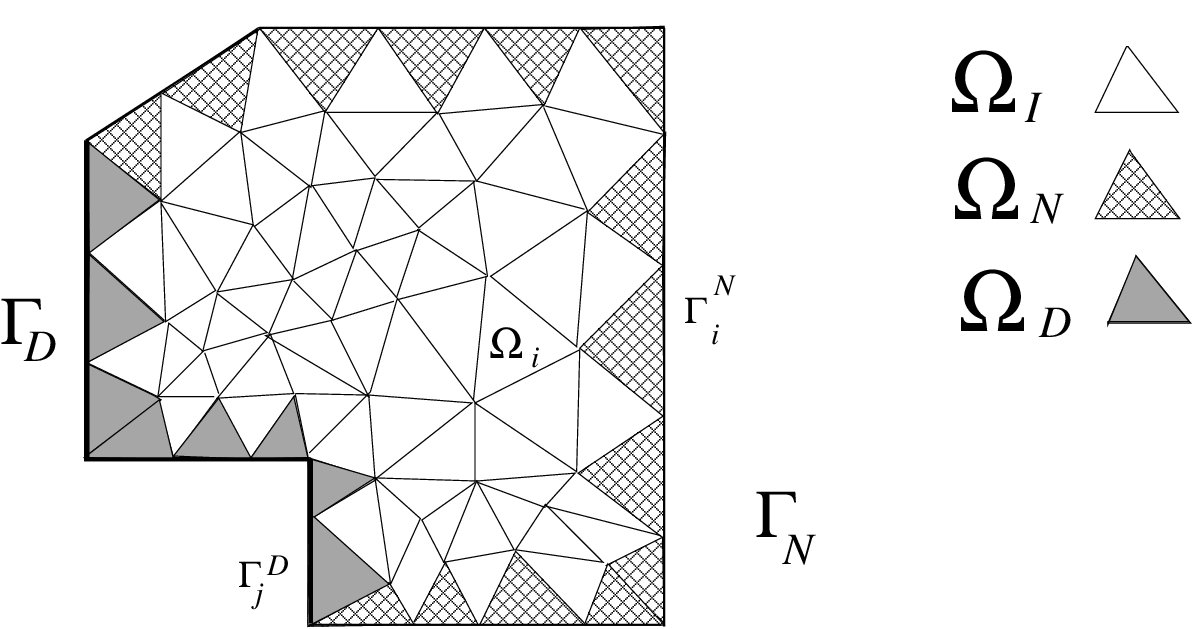}}
\caption{}
\end{figure}

Now, instead of $\cP$ we consider a modified {\em problem $\wh \cP$}:
 find
$\wh u\in H^1(\Omega)$ such that
$$
\wh u-\wh u_0\in V_0:=\{w\in H^1(\Omega): w|_{\Gamma^D}=0\}
$$
and
\begin{equation}
\label{ap6}
\IntO A\nabla \wh u\cdot\nabla w\,dx=\IntO \wh fw\,dx+\int\limits_{\Gamma_N}\wh Fw\,ds,
\qquad\forall w\in V_0,
\end{equation}
where the functions $\wh f$, $\wh F$, and $\wh u_0$ are simplified counterparts of $f$, $F$, and $u_0$, respectively.
Simplifications are done on subdomains and can be motivated by different reasons, e.g., they may be applied
in order to avoid difficult integration procedures, which are necessary
if the functions have a complicated behavior with many local details.

Our goal is to
 deduce an estimate of the difference between the exact solutions of these two problems
% (i.e., between $(u,p)$ and $(\wh u,\wh p)$)
in terms of the norm
$$
\Normt{u-\wh u}^2:=\IntO A \nabla(u-\wh u)\cdot \nabla(u-\wh u)\,dx,
$$
which presents the {\em error of simplification}. If this error is insignificant,
then the problem (\ref{ap6}) can be successfully used instead of (\ref{ap5}).

In the derivation, we use the Poincare estimate (\ref{1.1}) for cells in the set $\Omega_I$
with the
respective constant $C_P(\Omega_i)$ (or its upper bound $\frac{{\rm diam }\Omega_i}{\pi}$). For the cells in $\Omega_N$,
we use the estimate (\ref{eq1Gamma}). If the cells are formed by triangles, then  $\Gamma^N_i$
may contain either one or two sides of triangles  (see Fig. ~3). Therefore, we can use estimates, which
follow from Theorems \ref{threeleg1} and \ref{three2legs1}. Analogously, for the cells in $\Omega_D$,
we use the estimate (\ref{eq1Omega}) and
Theorems \ref{threeleg2} and \ref{three2legs2} related to the cases,
in which functions have zero mean value either on one or on two sides of triangles.

We define the quantities
\begin{eqnarray}
\label{ap11}
&&D^2_1:=\!\sum\limits_{\Omega_i\in \cO}\!
 {\mathbb C}^2_i\|f-\wh f\|^2_{2,\Omega_i}\ \ {\rm and}\ \ 
% \label{ap12}
%&{\cal I}_0:= &\sum\limits_{\Omega_i\in \cOD}\mean{u_0-\wh u_0}_{\Gamma^D_i}\int\limits_{\Omega_i}(f-\wh f)dx,\\
D^2_2:=\!\sum\limits_{\Omega_i\in \cON}\!
 C_2(\Omega_i,\Gamma^N_i)^2\|F-\wh F\|^2_{2,\Gamma^N_i},
\end{eqnarray}
where
\begin{eqnarray}
\label{ap14}
 {\mathbb C}_i=\left\{
 \begin{array}{ll}
 C_P(\Omega_i)\;&{\rm if }\quad\Omega_i\in \cOI\cup\cON,\\
C_1(\Omega_i,\Gamma^D_i)\ &{\rm if }\quad\Omega_i\in \cOD.
\end{array}
\right.
\end{eqnarray}
Finding the quantities is a very easy task. It is reduced to integration of known functions
and { does not require solving a boundary value problem}. The theorem below shows that
a guaranteed majorant of $\Normt{u-\wh u}$ can be expressed throughout $D_1$,  $D_2$
and easily computable quantities
\begin{eqnarray}
\label{ap19}
&{\mathcal I}_0= \sum\limits_{\Omega_i\in \cOD}\mean{u_0-\wh u_0}_{\Gamma^D_i}
\displaystyle\int\limits_{\Omega_i}(f-\wh f)\,dx,\\
\label{ap20}
&{\mathcal I}_1(\phi)=\displaystyle\int\limits_{\Omega}(f-\wh f)\phi\,dx,\qquad{\mathcal I}_2(\phi)=
\displaystyle\int\limits_{\Gamma^N }(F-\wh F)\,\phi\,ds,
\end{eqnarray}
where $\phi$ is an arbitrary function in %$\Phi(\Omega):=\left\{H^1(\Omega),\quad\phi=u_0-\wh u_0\;  {\rm on} \Gamma^D\right\}$.
$H^1(\Omega)$ such that $\phi=u_0-\wh u_0$ on $\Gamma^D$.
\begin{Theorem}
\label{Thuwhu}
Let $u$ and $\wh u$ be the solutions of (\ref{ap5}) and (\ref{ap6}),
respectively, and
\begin{eqnarray}
\label{ap15}
&\mean{f-\wh f}_{\Omega_i}=0\qquad&\,\forall\, \Omega_i\in \cOI\cup\cON,\\
\label{ap16}
&\mean{F-\wh F}_{\Gamma^N_i}=0\qquad &\, \forall \,\Omega_i\in \cON.
\end{eqnarray}
Then
\begin{equation}
\label{ap17}
\Normt{u-\wh u}\leq\,\rho_1+
\sqrt{\rho_2+\rho^2_1},
\end{equation}
where
\begin{equation}
\label{ap18}
2\rho_1=\frac{D_1+D_2}{\sqrt{c}}+\Normt{\phi}\quad{\rm and}\quad
\rho_2={\mathcal I}_0+{\mathcal I}_1(\phi)+{\mathcal I}_2(\phi).
\end{equation}
\end{Theorem}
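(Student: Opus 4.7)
The plan is to exploit Galerkin orthogonality between problems $\cP$ and $\wh\cP$ via a carefully chosen test function in $V_0$, then localize the resulting identity over the subdomains $\Omega_i$ using the Poincaré-type inequalities proved in Sections 2 and 3. Set $e:=u-\wh u$, and note that $e|_{\Gamma^D}=u_0-\wh u_0=\phi|_{\Gamma^D}$, so $w:=e-\phi$ lies in $V_0$. Subtracting (\ref{ap6}) from (\ref{ap5}), testing against this $w$, and moving the term $\int_\Omega A\nabla e\cdot\nabla\phi\,dx$ to the right-hand side yields the identity
\begin{equation*}
\Normt{e}^2=\int_\Omega A\nabla e\cdot\nabla\phi\,dx+\int_\Omega(f-\wh f)(e-\phi)\,dx+\int_{\Gamma^N}(F-\wh F)(e-\phi)\,ds.
\end{equation*}
The first term on the right is bounded by $\Normt{e}\Normt{\phi}$ via the $A$-weighted Cauchy--Schwarz inequality, contributing the $\Normt{\phi}$ part of $2\rho_1$.

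For the two remaining terms I would split each integral into a sum over cells and exploit the zero-mean hypotheses (\ref{ap15}), (\ref{ap16}). On each $\Omega_i\in\cOI\cup\cON$ the condition $\mean{f-\wh f}_{\Omega_i}=0$ lets me subtract $\mean{e}_{\Omega_i}$ from $e$ without changing the integral, and the standard Poincaré inequality (\ref{1.1}) gives a bound proportional to $C_P(\Omega_i)\|f-\wh f\|_{2,\Omega_i}\|\nabla e\|_{2,\Omega_i}$. On each $\Omega_i\in\cOD$, the right constant to subtract is $c_i:=\mean{u_0-\wh u_0}_{\Gamma^D_i}$, since then $e-c_i$ has zero mean on $\Gamma^D_i$, so $e-c_i\in\wt H^1(\Omega_i,\Gamma^D_i)$ and (\ref{eq1Omega}) applies with constant $C_1(\Omega_i,\Gamma^D_i)$, while the residual constant piece $c_i\int_{\Omega_i}(f-\wh f)\,dx$ summed over $\cOD$ is exactly $\mathcal I_0$. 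On the Neumann boundary, the zero-mean condition on $F-\wh F$ on each $\Gamma^N_i$ lets me subtract $\mean{e}_{\Gamma^N_i}$ and invoke (\ref{eq1Gamma}) with constant $C_2(\Omega_i,\Gamma^N_i)$. The portions of $(e-\phi)$ involving $\phi$ directly produce the terms $\mathcal I_1(\phi)$ and $\mathcal I_2(\phi)$, which are collected into $\rho_2$.

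Summing the cell-wise bounds by the discrete Cauchy--Schwarz inequality yields aggregate estimates $D_1\|\nabla e\|_{2,\Omega}$ and $D_2\|\nabla e\|_{2,\Omega}$, and the uniform ellipticity $c\|\nabla e\|_{2,\Omega}^2\leq\Normt{e}^2$ converts these into $(D_1+D_2)\Normt{e}/\sqrt c$. Together with the $\Normt{e}\Normt{\phi}$ term, they form the linear coefficient $2\rho_1$, while the remaining constant pieces aggregate into $\rho_2$, giving a quadratic inequality $\Normt{e}^2\leq 2\rho_1\Normt{e}+\rho_2$ from which (\ref{ap17}) follows by the elementary quadratic formula. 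The main technical obstacle is the treatment of $\cOD$ cells: the trace of $e$ on $\Gamma^D_i$ is a fixed nonzero function determined by the Dirichlet data mismatch, so (\ref{eq1Omega}) cannot be applied directly; the decisive step is to shift $e$ by the precise mean $\mean{u_0-\wh u_0}_{\Gamma^D_i}$, producing the $\mathcal I_0$ correction, and to keep careful track of signs when collecting the $\mathcal I$-terms into the final form of $\rho_2$.
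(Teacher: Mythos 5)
Your proposal is correct and follows essentially the same route as the paper's own proof: the same test function $w=u-\wh u-\phi\in V_0$, the same cell-wise splitting with mean subtraction (using $\mean{u_0-\wh u_0}_{\Gamma^D_i}$ on Dirichlet cells to produce ${\mathcal I}_0$), the same application of (\ref{1.1}), (\ref{eq1Omega}), (\ref{eq1Gamma}) with the constants ${\mathbb C}_i$ and $C_2(\Omega_i,\Gamma^N_i)$, and the same concluding quadratic inequality $\Normt{u-\wh u}^2\le 2\rho_1\Normt{u-\wh u}+\rho_2$. No substantive differences to report.
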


\begin{proof} From (\ref{ap5}) and  (\ref{ap6}) it follows that
\begin{multline}
   \int\limits_\Omega A ( \nabla u - \nabla \wh u ) \cdot \nabla w \, dx\\
   =\int\limits_{\Omega}(f-\wh f)w\,dx+
    \int\limits_{\Gamma^N }(F-\wh F)\,w\,ds\quad\forall w\in V_0.
   \label{ap21}
\end{multline}
Since $w=u-\wh u-\phi\in V_0$ we can use it as a test function. Then
\begin{multline}
 \label{ap22}
 \int\limits_\Omega A  \nabla( u -  \wh u ) \cdot \nabla w \, dx=\Normt{u-\wh u}^2+
  \int\limits_\Omega A  \nabla (u -  \wh u ) \cdot \nabla\phi\, dx\\
\ge \Normt{u-\wh u}^2-\Normt{u-\wh u}\,\Normt{\phi}.
\end{multline}
Consider the first term in the right-hand side of (\ref{ap21}):
\begin{multline}
 \label{ap23}
 \int\limits_\Omega (f-\wh f)w\, dx\\
 =\sum\limits_{\Omega_i\in \cOI\cup\cON}\int\limits_{\Omega_i}(f-\wh f)(u-\wh u)dx+
 \sum\limits_{\Omega_i\in \cOD}\int\limits_{\Omega_i}(f-\wh f)(u-\wh u)dx+{\mathcal I}_1(\phi).
 \end{multline}
 The terms of the first sum in (\ref{ap23}) are estimated using (\ref{ap15}) and (\ref{1.1}):
 \begin{multline*}
% \sum\limits_{\Omega_i\in \cOI\cup\cON}
\int\limits_{\Omega_i}(f-\wh f)(u-\wh u)dx= %\sum\limits_{\Omega_i\in \cOI\cup\cON}
\int\limits_{\Omega_i}(f-\wh f)(u-\wh u-\mean{u-\wh u}_{\Omega_i})\,dx\\
 \leq %\sum\limits_{\Omega_i\in \cOI\cup\cON}
 C_P(\Omega_i)\|f-\wh f\|_{2,\Omega_i}\| \nabla (u -  \wh u) \|_{2,\Omega_i},
\qquad \Omega_i\in \cOI\cup\cON.
 \end{multline*}
 The terms of the second sum are estimated with the help of (\ref{eq1Omega}) as follows:
 \begin{multline*}
 %\sum\limits_{\Omega_i\in \cOD}
\int\limits_{\Omega_i}(f-\wh f)(u-\wh u)\,dx= %\sum\limits_{\Omega_i\in \cOD}
\int\limits_{\Omega_i}(f-\wh f)(u-\wh u-\mean{u-\wh u}_{\Gamma^D_i})\,dx\\
 +%\sum\limits_{\Omega_i\in \cOD}
\int\limits_{\Omega_i}(f-\wh f)\mean{u-\wh u}_{\Gamma^D_i}\,dx\le %\sum\limits_{\Omega_i\in \cOD}
C_1(\Omega_i,\Gamma^D_i)\|f-\wh f\|_{2,\Omega_i}
 \| \nabla (u -  \wh u) \|_{2,\Omega_i}\\
 +%\sum\limits_{\Omega_i\in \cOD}
\mean{u-\wh u}_{\Gamma^D_i}\int\limits_{\Omega_i}(f-\wh f)dx,\qquad \Omega_i\in \cOD.
 \end{multline*}
 Summing up these estimates and using  (\ref{ap19}) we obtain
\begin{multline}
 \label{ap26}
 \int\limits_\Omega (f-\wh f)w\ dx\leq
\sum\limits_{\Omega_i\in \cO}{\mathbb C}_i\|f-\wh f\|_{2,\Omega_i}
   \| \nabla (u -  \wh u) \|_{2,\Omega_i}+ {\mathcal I}_0+{\mathcal I}_1(\phi)\\
\le \frac {D_1}{\sqrt{c}}\Normt{u-\wh u}+ {\mathcal I}_0+{\mathcal I}_1(\phi).
\end{multline}

%Now we recall that $\mean{\wh F-F}_{\Gamma^N_i}=0$ on any $\Gamma^N_i$. In view of this fact,
% we apply (\ref{eq1Gamma}) and for any $\Omega_i\in \cON$  deduce  the estimate
By means of (\ref{ap16}) and (\ref{eq1Gamma}) we find that
\begin{equation*}
%\label{ap25}
\int\limits_{\Gamma^N_i }(F-\wh F)(u-\wh u)\,ds\leq
 C_2(\Omega,\Gamma^N_i)\|F-\wh F\|_{2,\Gamma^N_i}
\|\nabla (u - \wh u)\|_{2,\Omega_i},
\end{equation*}
and (cf. (\ref{ap20}))
\begin{equation}
\label{ap28}
 \int\limits_{\Gamma^N }(F-\wh F)w\,ds
 \leq \frac {D_2}{\sqrt{c}}\Normt{u-\wh u}+{\mathcal I}_2(\phi).
\end{equation}
 Now (\ref{ap21}), (\ref{ap23}), (\ref{ap26}), and (\ref{ap28}) yield the estimate
\begin{equation}
 \label{ap29}
 \Normt{u-\wh u}^2\leq
 2\rho_1 \Normt{u-\wh u}+\rho_2,
\end{equation}
 where the quantities $\rho_1$ and $\rho_2$ are defined by (\ref{ap18}).

The quadratic inequality (\ref{ap29}) easily implies (\ref{ap17}).
\end{proof}
\begin{Remark}
It is worth noting that the quantity $\rho_2$ may be negative. However, the right hand
side of (\ref{ap17}) is always nonnegative.
Moreover, it vanishes if and only if $u=\wh u$ (i.e., if two problems are identical).
\end{Remark}
%\subsection{Special case}

Theorem \ref{Thuwhu} presents the most general form of the estimate.
If $\wh u_0=u_0$, then it can be  simplified. Indeed, in this case
one can choose $\phi\equiv0$, and (\ref{ap17}) is reduced to
\begin{equation*}
\Normt{u-\wh u}\leq\frac{D_1+D_2}{\sqrt{c}}.
\end{equation*}
Moreover, in this case we can replace $C_1(\Omega_i,\Gamma^D_i)$ in (\ref{ap14}) by a smaller
constant $C_F(\Omega_i,\Gamma^D_i)$ such that
\begin{equation}
\label{GammaD}
\|w\|_{2,\Omega}\leq C_F(\Omega_i,\Gamma^D_i)\|\nabla w\|_{2,\Omega},\qquad
\forall w\in H^1(\Omega_i)\, : \, w|_{\Gamma^D_i}=0.
\end{equation}
For simple domains such as rectangles or isoceles right
 triangles these constants are well-known.

Finally, we note that  similar analysis  can be performed for other
differential equations associated with the pair of conjugate operators ${\rm grad}$
and $-\dvg$.\medskip

\section{Acknowledgments}
The first author was supported by RFBR grants 11-01-00825 and 14-01-00534 and by St.Peterburg
University grant 6.38.670.2013. The second author was supported by RFBR grant 11-01-00531-a.
%Also, the authors thank N. Filonov and anonymous reviewer for useful comments.
We thank N. Filonov for useful comments. Also, we are grateful to the anonymous reviewer
and to S. Kr\"omer who found gaps in the previous proofs of Theorem 3.2.

\end{document}